      \newtheorem{theorem}{Theorem}
      \newtheorem{lemma}[theorem]{Lemma}
      \newtheorem{conjecture}[theorem]{Conjecture}
      \newtheorem{corollary}[theorem]{Corollary}
      \newtheorem{proposition}[theorem]{Proposition}
     \newtheorem{prop}[theorem]{Proposition}
      \newproof{proof}{Proof}
      \newproof{poc}{Proof of Corollary \ref{maincorollary}}
      \newproof{pot}{Proof of Theorem \ref{maintheorem}}
      \def \l {\lambda}
\def \Q {\mathbb Q}
\def \Z {\mathbb Z}
      \def\@setcopyright{}
      \def\serieslogo@{}
\begin{document}

\author[iastate]{Jonas Kibelbek\corref{cor1}}
\ead{kibelbek@iastate.edu}
\author[iastate,cornell]{Ling Long}
\ead{linglong@iastate.edu and LL637@cornell.edu}
\author[iastate]{Kevin Moss}\ead{kmoss@iastate.edu}
\author[iastate]{Benjamin  Sheller}
\ead{bsheller@iastate.edu}
\author[iastate]{Hao Yuan}\ead{hyuan@iastate.edu}
\address[iastate]{Department of Mathematics, Iowa State University, 
Ames, 50011, USA}
\address[cornell]{ Mathematics Department, Cornell University, 
Ithaca, NY 14850, USA}

\cortext[cor1]{Corresponding author, Phone: 1-515-294-8150}

%   \author{}
%   \address{}
%   \email{}

   \title{Supercongruences and Complex Multiplication}

  \begin{abstract}
   We study congruences involving truncated hypergeometric series of the form \linebreak 
    ${}_rF_{r-1}(\begin{smallmatrix}1/2, \, \cdots, 1/2 \\ 1, \, \cdots, 1 \end{smallmatrix}; \lambda)_{(mp^s-1)/2}
    = \sum_{k=0}^{(mp^s-1)/2} ((1/2)_k/k!)^r \lambda^k$ where $p$ is a prime and $m, s, r$ are positive integers. 
	These truncated hypergeometric series are related to the arithmetic of a family of
    algebraic varieties and exhibit Atkin and Swinnerton-Dyer type congruences. 
    In particular, when $r=3$, they are related to K3 surfaces. For special values of $\l$, with $s=1$ and $r=3$, 
    our congruences are stronger than what can be
    predicted by the theory of formal groups because of the presence of elliptic curves with complex multiplications.   
    They generalize a conjecture made by Rodriguez-Villegas for the $\l=1$ case and confirm some
    other supercongruence conjectures  at special values of $\l$. 
 \begin{keyword}  complex multiplication \sep formal groups \sep truncated hypergeometric series \sep supercongruences \sep 
 \MSC[2010] 33C20, 11G07, 11G15, 44A20
 \end{keyword}
  \end{abstract}

 %  \date{\today}

%\subjclass{ 33C20, 11G07, 11G15, 44A20}
%\keywords{complex multiplication, formal groups, hypergeometric series, supercongruences}
   \maketitle

\section{Introduction}

The hypergeometric series $_rF_{r-1}$ is defined as
\[_{r}F_{r-1}\left(\begin{array}{c}
a_{1},a_{2},\cdots,a_{r}\\
b_{1},b_{2},\cdots,b_{r-1}
\end{array};\lambda\right):=\sum_{k=0}^{\infty} \left( \frac{(a_1)_k (a_2)_k \cdot \cdot \cdot
(a_r)_k}{k! (b_1)_k (b_2)_k \cdot \cdot \cdot (b_{r-1})_k} \right)  \l^{k} \]
where  $(a)_k:=a(a+1) \cdots (a+k-1)$ and none of the $b_i$ is a negative integer \cite{andrews}.
The truncated hypergeometric series $_rF_{r-1}(\begin{smallmatrix} a_1, \; \cdot \cdot \cdot,
\; a_r \\ b_1, \; \cdot \cdot \cdot, \; b_{r-1} \; \end{smallmatrix}; \l)_n$, is the degree $n$ polynomial in $\l$ obtained by 
truncating the hypergeometric series to the sum from $k=0$ to $n$.
%Hypergeometric series are fascinating to number theorists  partially due to their connections with periods  and modular forms.  We will provide more background in \S 2.

In this paper, we study the arithmetic of 
$F_{r}(\l)_n:={}_rF_{r-1}(\begin{smallmatrix}\frac{1}{2}, \, \cdots, \frac{1}{2} \\ 1, \, \cdots,  1 \end{smallmatrix}; 
\lambda)_{n}$; these values are related to the  varieties $\mathcal X_r(\l): W^2=X_1\cdots X_r(X_1-X_2)\cdots(X_{r-1}-X_r)(X_r-\l X_1)$, 
which generalize  the Legendre family of elliptic curves.  In terms of counting points, Deuring's argument \cite[pp. 255]{Deuring} 
yields that for any $\l\in \mathbb F_p$, $$\#\mathcal X_r(\l)(\mathbb F_p)\equiv F_r(\l)_{{p-1}}\equiv F_r(\l)_{\frac{p-1}2} \pmod p.$$  
In particular, when $r=3$, $\mathcal{X}_3(\l)$ is 
a family of K3 surfaces that has been studied in \cite{AOP, Long}, denoted by $S_\l = \mathcal{X}_3(\l)$.  Over an arbitrary finite field $\mathbb F$ 
containing $\l$, Ahlgren, Ono, and Penniston showed that $\#(S_\l/\mathbb F)$  can be computed using points on $E_\l: y^2=(x-1)(x^2-\frac 1{1-\l})$ 
over $\mathbb F$  \cite{AOP}.  By Dwork \cite{Dwork}, for any $\l\in \Z_p$ such that $F_r(\l)_{p-1}\neq 0 \pmod p$ (i.e. $p$ 
 \emph{ordinary}  for $\mathcal X_r(\l)$) and for any integer $m\ge 1$,
\begin{equation}\label{Dworkslimit}  
F_r(\l)_{mp^s-1} \equiv \gamma(\l){F_r(\l^p)_{mp^{s-1}-1}} \pmod{p^s} \end{equation} 
for a $p$-adic unit $\gamma(\l)$ which is independent of $m$, but may vary if $\l $ is replaced by $\l^p$.

\begin{theorem}\label{prop1}Let $p$ be an odd prime, $\l\in \Z_p$ such that $\mathcal X_r(\l)$ has  good ordinary reduction at $p$.  Then 
$\ell(\tau)=\sum_{n\ge 1} \frac{F_r(\l)_n}{2n+1}\tau^{2n+1}$ is the logarithm of a formal group over $\Z_p$, which is isomorphic to a 
formal group attached to $\mathcal X_r(\l)$ constructed by Stienstra \cite{Stienstra}. When $r=3$, 
 for all integers $s\ge 1$ and $m$ odd
$${F_3(\l) _{\frac{mp^s-1}2}}\equiv  \left( \frac{\l-1}{p} \right)\alpha_{p,\lambda}{}^2 {F_3(\l)_{\frac{mp^{s-1}-1}2}} \pmod {p^s},$$ 
with $\alpha_{p,\l}$  being  the unit root of
$X^2-[p+1-\#(E_\l/{\mathbb F}_p)]X+p=0$. 
\end{theorem}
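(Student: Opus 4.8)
The plan is to separate the two assertions: that $\ell(\tau)$ is the logarithm of a formal group over $\Z_p$ isomorphic to Stienstra's, and the two-term congruence for $r=3$. Throughout I write $\ell(\tau)=\sum_{N\text{ odd}}\frac{b(N)}{N}\tau^{N}$ with $b(N):=F_r(\l)_{(N-1)/2}$, so that the coefficient of $\tau^{2n+1}$ is $F_r(\l)_n/(2n+1)$ as stated.

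For the first assertion I would invoke the functional-equation criterion for formal group logarithms over $\Z_p$ (Honda's theory). The integrality of $F(X,Y)=\ell^{-1}(\ell(X)+\ell(Y))$ is equivalent to a family of congruences among the $b(N)$ indexed by powers of $p$, and these are exactly the Dwork congruences recorded in \eqref{Dworkslimit}; the good ordinary hypothesis guarantees that the relevant leading constant $\gamma(\l)$ is a $p$-adic unit and that the formal group has height one. Stienstra \cite{Stienstra} attaches to $\mathcal X_r(\l)$ a formal group whose logarithm has these same coefficients $b(N)$, and since two formal group laws over $\Z_p$ with identical logarithms are strictly isomorphic, the identification with Stienstra's group follows.

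For the case $r=3$ the formal group is ordinary of height one, so by Honda's classification its logarithm satisfies a functional equation $u\ast\ell\in p\,\Z_p[[\tau]]$ whose type $u(T)$ reflects the Frobenius characteristic polynomial on the relevant piece of $H^2(S_\l)$. Extracting the coefficient of $\tau^{mp^s}$, with $m$ odd and $p\nmid m$, and inducting on $s$—using that every Frobenius eigenvalue other than the unit root $\pi$ has positive $p$-adic valuation, so its contribution is killed modulo $p^s$—produces the clean two-term relation
$$F_3(\l)_{(mp^s-1)/2}\equiv \pi\,F_3(\l)_{(mp^{s-1}-1)/2}\pmod{p^s}.$$
(In the simplest situation the type is the degree-one factor $p-\pi T$, and the coefficient of $\tau^{mp^s}$ is literally $\bigl(b(mp^s)-\pi\,b(mp^{s-1})\bigr)/(mp^s)\in\Z_p$.) The theorem is thereby reduced to identifying the unit root as $\pi=\left(\frac{\l-1}{p}\right)\a_{p,\l}^{2}$.

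To pin down $\pi$ I would feed in the Ahlgren--Ono--Penniston description \cite{AOP} of $\#(S_\l/\mathbb F)$ through the elliptic curve $E_\l$. Their computation realizes the relevant two-dimensional Galois piece of $H^2(S_\l)$ as a quadratic twist by $\l-1$ of the symmetric square of $H^1(E_\l)$, whose Frobenius eigenvalues are $\left(\frac{\l-1}{p}\right)\a_{p,\l}^{2}$ and $\left(\frac{\l-1}{p}\right)\beta_{p,\l}^{2}$ with $\a_{p,\l}\beta_{p,\l}=p$. As $\a_{p,\l}$ is the unit root, $\left(\frac{\l-1}{p}\right)\a_{p,\l}^{2}$ is the unique eigenvalue of valuation zero and hence equals $\pi$. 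The main obstacle is making this identification exact rather than merely modular: one must match the unit root of Stienstra's formal group with the unit-root eigenvalue of crystalline Frobenius, and the exact value (not just its residue) is forced because the point counts of $S_\l$ over all $\mathbb F_{p^s}$ determine the Frobenius eigenvalues, which \cite{AOP} evaluates in terms of $E_\l$. The residual compatibility $F_3(\l)_{(p-1)/2}\equiv\left(\frac{\l-1}{p}\right)\a_{p,\l}^{2}\pmod p$—an analog of the Hasse-invariant comparison, corresponding to $s=1$, $m=1$—then fixes which twisted square is the unit root and confirms the normalization, completing the proof.
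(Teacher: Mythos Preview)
Your proposal has a genuine gap in the identification with Stienstra's formal group. You assert that Stienstra's construction produces a formal group \emph{whose logarithm has these same coefficients} $b(N)=F_r(\l)_{(N-1)/2}$, and then conclude the isomorphism because identical logarithms give strictly isomorphic laws. But Stienstra's logarithm for $H^{r-1}(\mathcal X_r,\hat G_{m,\mathcal X_r})$ has coefficients
\[
b_{r,n}(\l)=\,{}_rF_{r-1}\!\left(\begin{array}{cccc}-n,&\cdots,&-n\\ 1,&\cdots,&1\end{array};(-1)^r\l\right),
\]
a \emph{terminating} hypergeometric sum with upper parameters $-n$, not the truncation $F_r(\l)_n$ with upper parameters $\tfrac12$. (Compare the $r=2$ case in \eqref{a_k}: the two polynomials agree modulo $p$ when $n=\tfrac{p-1}{2}$, but are not equal.) The paper bridges this by appealing to an observation of Koblitz \cite{Koblitz} that the two sequences of ratios have the \emph{same} $p$-adic limit $\alpha_r$; hence both formal groups have the same $F_p$-type $F_p=\{\alpha_r\}$ and are therefore isomorphic over $\Z_p$. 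Without this step, your argument for the isomorphism does not go through.

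There is a second, related gap in the integrality argument. You invoke the Dwork congruences \eqref{Dworkslimit} as providing exactly the Honda-type congruences needed for integrality of $\ell^{-1}(\ell(X)+\ell(Y))$ over $\Z_p$. But \eqref{Dworkslimit} relates $F_r(\l)_{mp^s-1}$ to $F_r(\l^p)_{mp^{s-1}-1}$, with $\l^p$ in the lower term; this is a congruence in $\Z_p[[\l]]$ for the Hilbert structure $\l\mapsto\l^p$, not the one you need for a fixed $\l\in\Z_p$. The paper handles this carefully in Proposition~\ref{Dworkprop}: it adds to Dwork's hypotheses a vanishing condition c) that permits truncation at $\tfrac{mp^s-1}{2}$, and then shows that the ratio $\sum_{i\le (mp^{s+1}-1)/2}A(i)x^i\big/\sum_{i\le (mp^s-1)/2}A(i)x^i$ modulo $p^{s+1}$ depends only on $x\pmod p$, so that one may replace $x^p$ by $x$ and obtain a genuine $p$-adic unit $\alpha_r$ independent of $m$. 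This is the content that makes $\ell$ the logarithm of a $\Z_p$-integral formal group with $F_p$-type $\{\alpha_r\}$; it is not a formality.

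Your treatment of the $r=3$ identification $\pi=\bigl(\tfrac{\l-1}{p}\bigr)\alpha_{p,\l}^2$ via the Ahlgren--Ono--Penniston computation is essentially the paper's approach and is fine once the unit root $\alpha_r$ has been correctly produced.
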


At special values of $\l$ such that $E_\l$ has complex multiplications (CM), stronger congruences have been observed. These congruences are 
known as \emph{supercongruences}.  Rodriguez-Villegas
conjectured several supercongruences involving truncated hypergeometric series in \cite{RV}, including the following:  for odd primes $p$,
$$F_3(1)_{\frac{p-1}{2}} = {}_{3}F_{2}\left(\begin{array}{c}
\frac{1}{2},\frac{1}{2},\frac{1}{2}\\
1,1
\end{array};1\right)_{\frac{p-1}2}\equiv b_p \pmod {p^2}$$  where $b_p$ is the $p$th coefficient of the weight 3 cusp 
form $\eta(4z)^6$, where $\eta(z)=q^{1/24} \prod_{n=1}^\infty (1-q^n)$ with $q=e^{2\pi i z}$, is the eta function.  
The $\l=1$ case was proved by Van Hamme in \cite[1996]{VH} and by Ono in \cite[1998]{Ono2}, using different methods.  

Similarly, Z.-W. Sun conjectured (see remark 1.4 in \cite{ZWSun3}) a congruence for the $\lambda=64$ case: 
$$_3F_2\left (\begin{array}{c}
\frac{1}{2},\frac{1}{2},\frac{1}{2}\\
1,1
\end{array};64 \right )_{\frac{p-1}2}=
\sum_{k=0}^{\frac{p-1}{2}} \left(\frac{(\frac 12)_k}{k!}\right)^3(64)^k\equiv a_p \pmod {p^2}$$ where $a_p=0$ if $p\equiv 3,5,6 \mod 7$ and
$a_p=4x^2-2p$ where $p=x^2+7y^2, x,y\in \mathbb Z$, if $p \equiv 1,2,4 \pmod{7}$.
In fact, this $a_p$ is just the $p$th coefficient of $\eta(z)^3\eta(7z)^3$.

\begin{theorem}\label{maintheorem}
Let $\lambda\neq 1$ be an algebraic number such that $E_\lambda$ has
complex multiplications. Let $p$ be a prime and $E_\l$   have a model defined over ${\mathbb Z}_p$ with   good
reduction modulo $p{\mathbb Z}_p$. Then
$$_3F_2\left ( \begin{array}{cccc}1/2,&1/2,&1/2  \\ &1&1\end{array}; \l \right )_{\frac{p-1}2}=\sum_{k=0}^{\frac{p-1}{2}}
\left(\frac{(\frac 12)_k}{k!}\right)^3\l^k\equiv \left( \frac{\l-1}{p} \right)\alpha_{p,\lambda}{}^2 \pmod {p^2}$$
where $\alpha_{p,\lambda}$ is the unit root of
$X^2-[p+1-\#(E_\l/{\mathbb F}_p)]X+p=0$ if  $E_\l$ is ordinary at $p$; and
$\alpha_{p,\lambda}=0$ if $E_\lambda$ is supersingular at $p$.
\end{theorem}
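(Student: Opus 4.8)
The plan is to replace the half-length truncation by a complete Frobenius-period sum, to evaluate that sum modulo $p^2$ through Gauss sums, and then to use the complex multiplication on $E_\lambda$ to pin the value down exactly. First I would dispose of the tail: for $\frac{p+1}{2}\le k\le p-1$ the Pochhammer symbol $(\tfrac12)_k=(2k-1)!!/2^{k}$ picks up exactly one factor of $p$, namely from the term $2\cdot\frac{p-1}{2}+1=p$, while $k!$ stays a $p$-adic unit, so $\big((\tfrac12)_k/k!\big)^3$ is divisible by $p^3$ and
$$F_3(\lambda)_{\frac{p-1}{2}}\equiv F_3(\lambda)_{p-1}=\sum_{k=0}^{p-1}\binom{2k}{k}^{3}\left(\frac{\lambda}{64}\right)^{k}\pmod{p^3}.$$
Reducing the right-hand side modulo $p$ already reproduces Deuring's point count and, via Theorem~\ref{prop1}, the value $\left(\frac{\lambda-1}{p}\right)\alpha_{p,\lambda}{}^2$; the entire task is to retain one further $p$-adic digit.

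Next I would pass to Gauss sums. Writing $\binom{2k}{k}=(2k)!/(k!)^2$ and feeding the factorials into the Gross--Koblitz formula, together with the first-order expansion of the $p$-adic Gamma function $\Gamma_p$, rewrites the complete sum modulo $p^2$ as a finite-field ${}_3F_2$-hypergeometric function in $\lambda$, that is, as a short combination of Jacobi sums built from the quadratic character. The appearance of the \emph{square} $\alpha_{p,\lambda}{}^2$ is explained structurally by Clausen's identity ${}_2F_1(\tfrac14,\tfrac14;1;\lambda)^2={}_3F_2(\tfrac12,\tfrac12,\tfrac12;1,1;\lambda)$, which exhibits $F_3(\lambda)$ as the square of a weight-two elliptic period; the relevant elliptic unit root is then identified with $\alpha_{p,\lambda}$ through the Ahlgren--Ono--Penniston description of $\mathcal X_3(\lambda)$ in terms of $E_\lambda$.

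The decisive input is the complex multiplication. When $E_\lambda$ has CM by an order in an imaginary quadratic field $K$, its Frobenius eigenvalues are values $\psi(\mathfrak p)$ of a Hecke character. At an ordinary prime $p=\mathfrak p\bar{\mathfrak p}$ the Jacobi sum governing this elliptic period equals, up to an explicit root of unity, the unit eigenvalue $\psi(\mathfrak p)=\alpha_{p,\lambda}$, and Clausen's squaring then supplies the claimed $\alpha_{p,\lambda}{}^2$; because $\psi(\mathfrak p)$ is an algebraic integer of absolute value $\sqrt p$, the error in the $\Gamma_p$-expansion, merely $O(p)$ in general, is forced to the correct value modulo $p^2$. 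At a supersingular (inert or ramified) prime the Frobenius trace vanishes, the Jacobi sums pair off under complex conjugation and cancel, and one obtains $F_3(\lambda)_{\frac{p-1}{2}}\equiv 0\pmod{p^2}$, matching $\alpha_{p,\lambda}=0$.

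The hard part will be exactly this upgrade from modulo $p$ to modulo $p^2$. Elementary point counting and the formal-group congruence of Theorem~\ref{prop1} control only the leading digit, while the next digit is governed by the first logarithmic derivative of $\Gamma_p$ and is a priori an uncontrolled $O(p)$ quantity. The core of the argument is to show that for CM values of $\lambda$ these contributions assemble into the single number $\psi(\mathfrak p)^2$; it is here, through the factorization $p=\mathfrak p\bar{\mathfrak p}$ and the algebraicity of the Hecke character, that complex multiplication enters essentially and makes the congruence genuinely stronger than what formal groups alone provide.
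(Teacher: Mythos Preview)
Your outline differs substantially from the paper's argument. The paper does not pass through Gross--Koblitz or finite-field hypergeometric functions at all. Instead it first establishes, via Z.-H.~Sun's identity together with the elementary congruence $\binom{\frac{p-1}{2}+k}{2k}\equiv(-1/16)^k\binom{2k}{k}\pmod{p^2}$ (equivalently, a truncated form of Clausen's formula), that
\[
{}_3F_2\!\left(\begin{smallmatrix}\tfrac12,\,\tfrac12,\,\tfrac12\\ 1,\,1\end{smallmatrix};\lambda\right)_{\frac{p-1}{2}}
\;\equiv\; P_{\frac{p-1}{2}}\!\bigl(\sqrt{1-\lambda}\bigr)^{2}\pmod{p^2},
\]
so the whole question is reduced to a supercongruence for a single Legendre-polynomial value attached to $E_\lambda$. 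That supercongruence is then imported wholesale from the theorem of Coster and Van Hamme, whose proof uses the addition law for the Weierstrass $\sigma$-function and the explicit action of the CM endomorphism $\pi$ on the period lattice. Squaring kills the ambiguous fourth root of unity and yields the Legendre symbol $\bigl(\frac{\lambda-1}{p}\bigr)$; in the supersingular case $P_{(p-1)/2}(\sqrt{1-\lambda})\equiv 0\pmod p$ already from ordinary ASD, and squaring gives $0\pmod{p^2}$ for free.

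Your proposal has the right structural intuition---the Clausen-type squaring, the Hecke-character interpretation of $\alpha_{p,\lambda}$, the split/inert dichotomy---but the decisive step is not justified. You assert that ``because $\psi(\mathfrak p)$ is an algebraic integer of absolute value $\sqrt p$, the error in the $\Gamma_p$-expansion, merely $O(p)$ in general, is forced to the correct value modulo $p^2$.'' This is a non sequitur: an \emph{archimedean} bound on $\psi(\mathfrak p)$ carries no $p$-adic information and gives no reason why the first-order $\Gamma_p'$ contributions should assemble into $\alpha_{p,\lambda}^{2}$ rather than into $\alpha_{p,\lambda}^{2}+cp$ for some nonzero $c$. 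The mechanism by which CM buys the extra power of $p$ is genuinely analytic---in Coster--Van Hamme it comes from an exact functional equation for $\sigma$ under the lattice endomorphism $\pi$---and nothing in your sketch replaces it. As written, your argument does not go beyond the modulo-$p$ statement already contained in Theorem~\ref{prop1}. If you wish to pursue the Gauss-sum route, you would need a separate $p$-adic lifting result tying the finite-field ${}_3F_2$ to the truncated sum modulo $p^2$ (in the spirit of Ahlgren--Ono or Mortenson), and then a CM evaluation of that finite-field object; both steps require real work that your outline does not yet supply.
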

Our result confirms the Conjecture of Sun mentioned above.  
We conjecture that the statement of Theorem \ref{prop1} is true modulo $p^{2s}$ when $E_\l$ has CM.

%These truncations of $_3F_2$ give arithmetic data about the curve $E_\l$.  
The hypergeometric series $_3F_2\left (\begin{array}{c}
\frac{1}{2},\frac{1}{2},\frac{1}{2}\\
1,1 \end{array}; \l \right )$, when \emph{not} 
truncated, gives an expression for the real period of the elliptic curve $E_\l$ as McCarthy shows in \cite{McCarthy}.

We derive the following corollary to Theorem \ref{maintheorem} in section \S \ref{corollarysec}:
 \begin{corollary} \label{maincorollary} Let  $H_k$ be the harmonic sum $\sum_{j=1}^k \frac 1j$.
If $E_\l$ is a CM elliptic curve,   then for almost all primes $p$ such that $\lambda$ embeds in $\mathbb{Z}_p$,
\[ \sum_{i=0}^{\frac{p-1}{2}} \binom{2i}{i}^3 \left( \frac{\lambda}{64} \right)^i
\left( 6(H_{2i} - H_i) + \left( \frac{\left( \frac{\lambda}{64} \right)^{p - 1} - 1}{p} \right) \right) \equiv 0 \pmod p. \]
\end{corollary}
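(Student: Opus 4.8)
The plan is to recover the corollary as the first $p$-adic correction concealed in the mod $p^2$ supercongruence of Theorem~\ref{maintheorem}. First I would pass to central binomials: since $\frac{(1/2)_k}{k!}=\frac{1}{4^k}\binom{2k}{k}$, we have $F_3(\l)_{(p-1)/2}=\sum_{k=0}^{(p-1)/2}\binom{2k}{k}^3(\l/64)^k$, so Theorem~\ref{maintheorem} becomes $\sum_{k=0}^{(p-1)/2}\binom{2k}{k}^3(\l/64)^k\equiv\left(\frac{\l-1}{p}\right)\a_{p,\l}^{\,2}\pmod{p^2}$. The two weights in the corollary are exactly the two natural first–order deformations of a summand $\binom{2k}{k}^3(\l/64)^k$. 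Differentiating the binomial part in a shift of the index produces the harmonic weight, $\frac{d}{dt}\binom{2(k+t)}{k+t}^3\big|_{t=0}=6\binom{2k}{k}^3\left(H_{2k}-H_k\right)$, using $\psi(2k+1)-\psi(k+1)=H_{2k}-H_k$; while the $p$-adic comparison of the power with its Frobenius twist produces the Fermat–quotient weight, $(\l/64)^{p}\equiv(\l/64)\bigl(1+p\,q_p(\l/64)\bigr)\pmod{p^2}$, where $q_p(x)=\frac{x^{p-1}-1}{p}$. Thus, up to a normalization, the sum in the corollary is the coefficient of $p$ in the difference between $F_3(\l)_{(p-1)/2}$ and a companion series obtained by applying both deformations simultaneously.

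Second, I would supply the companion and the cancellation. Dwork's relation~\eqref{Dworkslimit} and Theorem~\ref{prop1} (at $s=2$, $m=1$, giving $F_3(\l)_{(p^2-1)/2}\equiv\left(\frac{\l-1}{p}\right)\a_{p,\l}^{\,2}F_3(\l)_{(p-1)/2}\pmod{p^2}$) link the truncations at $\l$ and at its Frobenius conjugate $\l^p$ to one and the same CM datum. The role of the complex multiplication is that the right–hand side $\left(\frac{\l-1}{p}\right)\a_{p,\l}^{\,2}$ is governed by the Hecke character attached to $E_\l$ and is therefore $p$–adically rigid under the twist $\l\mapsto\l^p$ to the order we need. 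Consequently, expanding $\binom{2(i+jp)}{i+jp}$ and $(\l/64)^{i+jp}$ over the doubled range $0\le k\le(p^2-1)/2$ by the standard Jacobsthal–Wolstenholme factorization modulo $p^2$, the factor carrying $\a_{p,\l}^{\,2}$ cancels against the mod $p^2$ value supplied by Theorem~\ref{maintheorem}, and only the $O(p)$ correction survives. Dividing this correction by $p$ is designed to leave precisely $\sum_{i=0}^{(p-1)/2}\binom{2i}{i}^3(\l/64)^i\bigl(6(H_{2i}-H_i)+q_p(\l/64)\bigr)\equiv0\pmod p$.

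The main obstacle will be the bookkeeping of these first–order corrections: I must pin down the mod $p^2$ factorization of $\binom{2(i+jp)}{i+jp}^3$ in the form $\binom{2i}{i}^3\binom{2j}{j}^3$ times a correction $1+p(\cdots)$, fix the constant $6$, and show that the several contributions carrying a spurious factor of $i$ (for instance the $pk\,q_p(\l/64)$ from $(\l/64)^{pk}$ and the harmonic terms from the expansion $\binom{2k}{k}=(-4)^k\binom{-1/2}{k}$ of $\binom{-1/2}{k}$ around $\binom{(p-1)/2}{k}$) recombine into the index–free weight of the corollary. This is where generalized Wolstenholme congruences enter and where the exact form $6(H_{2i}-H_i)+q_p(\l/64)$ is forced. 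The remaining points are routine: the CM/ordinary hypothesis makes $\a_{p,\l}$ a unit so the companion congruences apply, and the phrase \emph{almost all primes} absorbs the finitely many $p$ of bad or supersingular reduction, the $p$ at which $\l$ fails to embed in $\Z_p$, and the $p$ dividing the finitely many denominators ($64$, the harmonic denominators, and the conductor) that must be $p$–adic units.
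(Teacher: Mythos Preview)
Your overall idea—that the corollary records the $O(p)$ correction hidden in the supercongruence, extracted by comparing a longer truncation to its $p$-adic factorization—is exactly the paper's strategy. But your specific implementation has a gap.

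When you expand $a_{(p^2-1)/2}=\sum_{k\le(p^2-1)/2}\binom{2k}{k}^3(\l/64)^k$ via $k=i+jp$ and carry out the Jacobsthal--Wolstenholme bookkeeping modulo $p^2$, what actually emerges (this is Lemma~\ref{aux} in the paper, specialized to $k=n=\tfrac{p-1}{2}$) is
\[
a_{(p^2-1)/2}\;\equiv\;a_{(p-1)/2}^{\,2}\;+\;p\,b_{(p-1)/2}\cdot S\pmod{p^2},
\qquad S:=\sum_{j=0}^{(p-1)/2} j\binom{2j}{j}^3\Bigl(\tfrac{\l}{64}\Bigr)^{j}=\l\,\frac{d}{d\l}\,a_{(p-1)/2}.
\]
Your input from Theorems~\ref{prop1} (at $s=2$) and~\ref{maintheorem} correctly gives $a_{(p^2-1)/2}\equiv a_{(p-1)/2}^{\,2}\pmod{p^2}$, but then you only obtain $S\cdot b_{(p-1)/2}\equiv0\pmod p$. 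The ``spurious factor'' of $j$ does \emph{not} recombine into the index-free weight as you hope; it persists as the separate scalar $S$, and you give no argument that $S\not\equiv0\pmod p$ at the CM values of $\l$ in question. Since $S$ is, modulo $p$, essentially $\l$ times the derivative of a degree-$\tfrac{p-1}{2}$ polynomial, there is no a priori reason it should be a unit.

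The paper avoids this obstruction by choosing a much shorter companion: it takes $n=1$ in Lemma~\ref{aux} (equivalently $m=3$, comparing $a_{(3p-1)/2}$ with $a_{(p-1)/2}\,a_1$), so that the corresponding factor is $\sum_{j\le1} j\binom{2j}{j}^3(\l/64)^j=\l/8$, visibly a unit for almost all $p$. The supercongruence needed to kill $T_{(p-1)/2,1}$ modulo $p^2$ then comes from the $m=3$ instance of Theorem~\ref{CosterVanHammeTheorem} via the Legendre-polynomial identity~\eqref{ZHSunidentity}, rather than from Theorem~\ref{maintheorem} together with the $s=2$ case of Theorem~\ref{prop1}.
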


Below is one simple, special case of these congruences for $\lambda=64$.

\begin{corollary}
For  all primes $p>3$, we have
 \begin{equation}\label{cor:4}
   \sum_{i=1}^{\frac{p-1}2} \binom{2i}{i}^3 \sum_{j=1}^{i} \frac{1}{i+j} \equiv 0 \pmod{p}.
 \end{equation}
\end{corollary}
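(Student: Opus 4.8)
The plan is to read this congruence off as the $\lambda=64$ specialization of Corollary \ref{maincorollary}. The only preliminary I need is the harmonic-sum identity
\[
\sum_{j=1}^{i}\frac{1}{i+j}=\sum_{j=i+1}^{2i}\frac1j=H_{2i}-H_i,
\]
which holds over $\Q$ and hence modulo $p$: for $1\le i\le (p-1)/2$ every denominator $i+j$ satisfies $0<i+j\le 2i\le p-1$, so all these reciprocals are well defined mod $p$. Thus the inner sum in the statement is exactly $H_{2i}-H_i$, the quantity appearing in Corollary \ref{maincorollary}.

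I would then invoke Corollary \ref{maincorollary} with $\lambda=64$, which is legitimate because $E_{64}$ has complex multiplication (by an order in $\Q(\sqrt{-7})$, the CM field attached to the form $\eta(z)^3\eta(7z)^3$ appearing in Sun's conjecture above, which Theorem \ref{maintheorem} confirms). For $\lambda=64$ one has $\lambda/64=1$, so the summand weight $(\lambda/64)^i$ collapses to $1$ and the correction term $\bigl((\lambda/64)^{p-1}-1\bigr)/p$ vanishes identically. Corollary \ref{maincorollary} then reduces to
\[
6\sum_{i=0}^{(p-1)/2}\binom{2i}{i}^3\bigl(H_{2i}-H_i\bigr)\equiv 0 \pmod p.
\]
Because $p>3$, the factor $6$ is a unit mod $p$ and can be cancelled; the $i=0$ term vanishes since $H_0-H_0=0$; and substituting the identity above turns the remaining sum into $\sum_{i=1}^{(p-1)/2}\binom{2i}{i}^3\sum_{j=1}^{i}\frac1{i+j}$, which is the claim.

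The one genuine obstacle is the quantifier mismatch: Corollary \ref{maincorollary} is established only for \emph{almost all} admissible primes, whereas the target asserts the congruence for \emph{all} $p>3$ without exception. To upgrade it I would pin down the finite exceptional set attached to $\lambda=64$---namely the primes of bad reduction of $E_{64}$ together with any primes discarded in the derivation of Corollary \ref{maincorollary}---and dispose of the finitely many resulting congruences by direct numerical check, the primes $p=2,3$ being already excluded by hypothesis. If instead one wanted a self-contained argument uniform in $p>3$, one could look for an elementary proof of the displayed congruence, e.g. by exhibiting a sign-reversing symmetry of the summand modulo $p$; but the specialization-plus-finite-check route is the shortest.
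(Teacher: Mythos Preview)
Your derivation is exactly the one the paper intends: it presents this statement as ``one simple, special case'' of Corollary~\ref{maincorollary} at $\lambda=64$, and the reduction you give---identify $\sum_{j=1}^{i}\frac{1}{i+j}=H_{2i}-H_i$, set $\lambda/64=1$ so the correction term vanishes, and cancel the unit $6$ for $p>3$---is the whole argument. You are in fact more careful than the paper about the quantifier: Corollary~\ref{maincorollary} is stated only for almost all primes (it rests on Theorem~\ref{maintheorem}, which needs good reduction of $E_{64}$, and $E_{64}$ has CM by $\Q(\sqrt{-7})$ with bad reduction at $7$), so the single residual case $p=7$ does require the direct check you propose, which succeeds.
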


In general, such  congruences  are difficult to prove. For similar work, see  \cite{Ahlgren,AO}.  
Remark 1 of \cite{Long2} reduces an open supercongruence  to a 
 congruence like \eqref{cor:4}.

We end our introduction with another motivation for supercongruences.
It is known that the coefficients of  weight-$k$ noncongruence modular forms satisfy the so-called Atkin and Swinnerton-Dyer congruences \cite{ASD, Scholl}. 
These congruences are  supercongruences if  $k>2$ \cite{Scholl} and have played an important role in understanding the characterizations of genuine noncongruence 
modular forms \cite{LL}.

\section{Atkin and Swinnerton-Dyer congruences of a family of truncated hypergeometric series}

The Hasse invariants of the Legendre family of elliptic curves     $L_\l: y^2=x(x-1)(x-\l)$) are  $A_p(\l)=(-1)^{(p-1)/2} \sum_{i=0}^{(p-1)/2} \binom{\frac{p-1}2}{i}^2\l^i$ and
$$A_p(\l)\equiv [p+1-\#L_\l(\mathbb F_p)]\equiv (-1)^{(p-1)/2}  \,  _2F_1 \left(\begin{array}{c}
\frac{1}{2},\frac{1}{2}\\
1
\end{array};\l\right)_{(p-1)/2}\pmod p$$
where  $  _2F_1 \left(\begin{array}{c}
\frac{1}{2},\frac{1}{2}\\
1
\end{array};\l\right)$ is the unique-up-to-scalar, holomorphic-near-0 solution of the Picard-Fuchs equation of $L_\l$ (see \cite{Clemens}).    
In \cite{Dwork}, Dwork proved that when  $A_p(\l)\not \equiv 0 \pmod p$,  i.e. when $L_\l$ is ordinary at $p$,
$$ \frac{{ _{2}F_{1}\left(\begin{array}{c}
\frac{1}{2},\frac{1}{2}\\
1
\end{array};  \lambda\right)_{p^s-1}}}{{_{2}F_{1}\left(\begin{array}{c}
 \frac{1}{2},\frac{1}{2}\\
1
\end{array}; \lambda^p\right)_{p^{s-1}-1}}}   \equiv    \gamma_p(\l)  \pmod {p^s} $$ where $\gamma_p(\l)$ is a $p$-adic unit  
which usually varies when $\l$ is replaced by $\l^p$. 
If $\l=\l^p$, the limit is
  $ \left ( \frac{-1}p\right )\beta_p$, where  $\beta_p$ is the $p$-adic unit root of $T^2-[p+1-\#(L_\l/\mathbb F_p)]T+p=0$.
When $L_\l$ has CM, the $\gamma_p$ can often be obtained via Gauss sums and Jacobi sums.
 Yu has further extended Dwork's results to Dwork families of algebraic varieties
 \cite{JDYu}.

We now  compare Dwork's result with what can be predicted from Atkin and Swinnerton-Dyer congruences. Let $P_n(x)$ denote the $n$th Legendre polynomial, which can be defined by
$P_n(x) = \frac{1}{2^n n!} \frac{d^n}{dx^n} (x^2-1)^n$ \cite{andrews, CosterVanHamme, Sun}.
These polynomials form an important class of orthogonal polynomials and  have several nice properties; but
the fact most relevant to our application is that they have generating function $(1-2xt+t^2)^{-1/2}= \sum_{n=0}^\infty P_n(x)t^n$.
Because of this, special values of $P_n(x)$ show up in certain expansions of differential forms on elliptic curves.
The first few Legendre Polynomials are $P_0(x)=1$, $P_1(x)=x$, and $P_2(x)=\frac{1}{2}(3x^2-1)$.

For elliptic curves of the form $\mathcal E: y^2=x(x^2+Ax+B)$ defined over $\mathbb Z_p$ with $t=x/y$ as a local parameter at the point at infinity
(where $t$ has a simple zero), Coster and van Hamme showed that the coefficients of the $t$-expansion of the invariant differential form
$-\frac{dx}{2y}$ of $\mathcal{E}$ are essentially just special values of Legendre Polynomials (see formula (1) of \cite{CosterVanHamme}).
%(This does not require the curve $\mathcal{E}$ to be CM.)
In particular, for $L_\l: y^2=x(x-1)(x-\l)$,
\begin{equation}\label{omegaexpansion}
 -\frac{dx}{2y} = \sum_{k=0}^\infty P_k\left( \tfrac{1+\lambda}{1-\lambda}\right) (\lambda-1)^k t^{2k+1} \frac{dt}{t}.
 \end{equation}
For $k\geq 0$, letting $a_{2k+1} = P_k\left( \tfrac{1+\lambda}{1-\lambda}\right)(\lambda-1)^k$ and using  formulas (5) and (6) in \cite{Zagier},  we have 
\begin{equation}\label{a_k} a_{2k+1} = {}_2F_1(\begin{smallmatrix}-k, \; -k, \\ 1 \end{smallmatrix}; \lambda) (-1)^k=
{}_2F_1(\begin{smallmatrix}-k, \;1+k \\ 1 \end{smallmatrix}; \tfrac{-\l}{1-\l}) (\lambda-1)^k.
\end{equation}
Note that these are terminating hypergeometric series; i.e. degree $k$ polynomials of $\l$,  because of the $-k$ argument.

The Atkin and Swinnerton-Dyer congruences (ASD) for elliptic curves (Theorem 4 of \cite{ASD}) imply that if $\l$ embeds in
$\mathbb{Q}_p$ and $L_\l$ has good reduction modulo $p$, then for all positive integers $m,s$,
\begin{equation} \label{ASDcong} a_{mp^{s+1}}- A_p  a_{mp^s}+p  a_{mp^{s-1}}  \equiv 0 \pmod{p^{s+1}}\end{equation}
where $A_p = p+1-\#(L_\l/{\mathbb F}_p)$.
We define $a_k$ to be 0 if $k$ is not integral, as may happen for the final term if $s=0$.
The factors of $(-1)^k$ and $(\l-1)^k$ can be omitted from the expressions \eqref{a_k} for $a_k$ if we adjust the middle coefficient
of the ASD congruence by the Legendre symbols $\left(\frac{-1}{p}\right)$ or $\left(\frac{\l-1}{p}\right)$,
respectively.

Essentially, the ASD congruences say that for fixed $p$ and $m$, terms of the sequence $\{a_{mp^s}\}$ satisfy
a three-term congruence with increasing $p$-adic precision as $s$ increases.  The ASD
congruences generalize the Hecke recursion: Fourier coefficients $a_n$ of weight $k=2$, normalized Hecke newforms with
trivial nebentypus satisfy the three-term recursion, for all $m,s \geq 1$ and all $p$,
\begin{equation}\label{Hecke} a_{mp^{s+1}} - a_p a_{mp^s} + p a_{mp^{s-1}}=0. \end{equation} %We could give the Hecke recursion for arbitrary weight, but it doesn't seem necessary.

In the ASD congruences for an elliptic curve $\mathcal{E}$, we distinguish two cases. If the middle coefficient $A_p$ is divisible by $p$, 
we say that $\mathcal{E}$ is
\emph{supersingular} at $p$ or simply that $p$ is supersingular.  Otherwise, we say $\mathcal{E}$ is \emph{ordinary} at $p$ or that $p$ is ordinary.
Dwork's congruences, in which consecutive ratios of certain terms in a sequence converge to a $p$-adic limit, are related to
ASD congruences at ordinary primes. At ordinary primes, let $\beta_{p,\l}$ be
the $p$-adic unit root of $T^2-[p+1-\#(L_\l/{\mathbb F}_p)]T+p$.  Then the ASD congruences imply that
$\displaystyle \frac{a_{p^s}}{a_{p^{s-1}}} \equiv \beta_{p,\l} \pmod{p^s}.  $
Note that $a_1=1$, so the $s=1$ case of this congruence is just $a_p \equiv \beta_{p,\l} \pmod{p}$.

Thus, at all ordinary primes $p$ of $L_\l$, we obtain congruences for hypergeometric functions using the expansions 
given in \eqref{a_k} with $k=\frac{p^s-1}{2}$:

\begin{equation}\label{2F1cong}
\frac{
_2F_1\left(\begin{smallmatrix}\tfrac{1-p^s}{2}, \; \tfrac{1\pm  p^s}{2} \\ 1 \end{smallmatrix}; \l \right)
}{
_2F_1\left(\begin{smallmatrix}\tfrac{1-p^{s-1}}{2}, \; \tfrac{1\pm p^{s-1}}{2} \\ 1 \end{smallmatrix}; \l \right)
} \equiv \left(\frac{-1}{p}\right) \beta_{p,\l} \pmod{p^s}.
\end{equation}

The twist by the character $\left(\frac{-1}{p} \right)$ accounts for the factor $(-1)^k$ in the first equality in \eqref{a_k} and 
for the  change of argument from $\frac{-\l}{1-\l}$ to $\l$ and the factor $(\l-1)^k$ in the second equality in \eqref{a_k}. \footnote{The curve
$L_{\frac{-\l}{1-\l}}$ is isomorphic to the twist of $L_\l$ by $\left(\frac{1-\lambda}{p} \right)$.  Combining this with the 
factor of $(\lambda -1)^k$, we get a twist by $\left( \frac{-1}{p} \right)$.}

%{\color{blue} I moved the formal group discussion here, which is a few paragraphs later than it originally was.  }

Both the Hecke recursion and ASD congruences are related to formal groups.   Any sequence of $p$-adic integers 
$a_n$ with $a_1$ being a $p$-adic unit satisfying the
congruences \eqref{ASDcong} can be used to construct a formal group law  $F(x,y):=\ell^{-1}(\ell(x)+\ell(y)) \in \mathbb{Z}_p[[x,y]]$ with formal logarithm $\ell(x)=\sum_{n=1}^\infty
\frac{a_n}{n}x^n$.  Note that the power series $\ell(x)$ and $\ell^{-1}(x)$  
have denominators with arbitrarily large powers of $p$ in general; it is the congruences \eqref{ASDcong} that guarantee that the 
composition $F(x,y)$ has no denominators of $p$. 
Details can be found in \cite{Dit1} or other formal group theory references; specifically, 
the $p$-typification of $\ell^{-1}(\ell(x)+\ell(y))$ is isomorphic over $\mathbb{Z}_p$ to a formal group law with $F_p$-type 
$F_p = \{A_p\}- V_p$, where $F_p$ is the $p$th Frobenius operator, $\{A_p\}$ is a Hilbert operator, and $V_p$ is the
$p$th Verschiebung operator in the Cartier-Dieudonn\'e module.   (This integer $A_p$ corresponds to the eigenvalue of 
the $p$th Hecke operator; recall that the Hecke operator is just the sum of the Frobenius and Verschiebung operators on the space of congruence cusp forms.) 

 In fact, in the ordinary case, when $a_p \not \equiv 0 \pmod{p}$, our formal group is isomorphic 
 to one with $F_p$-type $F_p = \{ \beta_p\}$, where $\beta_p$ is a 
 $p$-adic unit.  This fact corresponds to the congruences $a_{mp^s} \equiv \beta_p 
 a_{mp^{s-1}} \pmod{p^s}$, which account for the many congruences we consider 
 for expressions of the form $\frac{a_{p^s}}{a_{p^{s-1}}}$.   Dwork's congruences, 
 in which the denominator involves $\l^p$ instead of $\l$, correspond to a 
 formal group law over $\mathbb{Z}_p[\l]$, which is isomorphic over $\mathbb{Z}_p[[\l]]$ 
 to a formal group law with $F_p$-type $F_p = \{ \gamma_p(\l) \}$, 
 for some $\gamma_p(\l) \in \mathbb{Z}_p[[\l]]$.  Dwork's formal group law can be 
 specialized to many different formal group laws over $\mathbb{Z}_p$ by choosing 
 suitable $\l \in \mathbb{Z}_p$ (as we show in Proposition \ref{Dworkprop}), but the $F_p$-type cannot be specialized by 
 substituting the value of $\l$ in $\gamma_p(\l)$, because the Hilbert structures on 
 $\mathbb{Z}_p[[\l]]$ and on $\mathbb{Z}_p$ are incompatible.\footnote{Dwork's 
 congruences make use of the endomorphism $\tau$ of $\mathbb{Z}_p[[\l]]$ 
 sending $f(\l)$ to $f(\l^p)$.  This endomorphism satisfies $\tau(f(\l))=f(\l^p) \equiv 
 f(\l)^p \pmod{p}$ for all $f(\l) \in \mathbb{Z}_p[\l]$ and is a ring endomorphism,  
 so it gives a Hilbert structure to formal group laws over $\mathbb{Z}_p[[\l]]$.  
 On $\mathbb{Z}_p$, however, we must use the identity endomorphism $\iota$, which also satisfies  
 $\iota(x)=x \equiv x^p \pmod{p}$ for all $x \in \mathbb{Z}_p$.  This is not compatible with a specialization of $\tau$ 
 unless we choose $\l$ satisfying $\l = \l^p$.}  So, even though the many specializations of Dwork's formal 
 groups are all isomorphic over $\mathbb{Z}_p$ as long as $\l \pmod{p}$ is fixed, 
 Dwork's congruenes give different limits $\gamma_p(\l)$ for these $\l$.  
 When we omit the $p$th power from the denominator, we obtain the same limit $\bigl( \frac{-1}{p} \bigr) \beta_p$ for 
 all $\l$ with $\l \pmod{p}$ fixed.  Note that $\gamma_p(\l)= \bigl( \frac{-1}{p} \bigr) \beta_p$ if $\l=\l^p$.

The perspective of formal groups motivates our approach to congruences and supercongruences of hypergeometric functions; many of the congruences
found in the literature seem to be initial cases of the ASD congruence structure.  In fact, our second theorem appears to be the very first
case of an ASD congruence; in Conjecture \ref{maintheoremconj}, we suggest that infinitely many more congruences hold.

Recall $F_r(\l)_n:=\, _rF_{r-1}\left ( \begin{array}{ccccc}1/2,&1/2,&\cdots,&1/2 \\ &1,& \cdots,&1\end{array};\l \right )_{n}$.
\begin{prop} \label{Dworkprop}
Let $p$ be an odd prime and  $A(n)$ be a sequence of numbers in $\Z_p$ such that $A(0)$ is a unit, and suppose the following three 
conditions hold. 
%and for all $n,m,s\in \Z_+$ and $x\in \Z_p$ such that $\sum_{i=0}^{p-1} A(i)x^i\not\equiv 0\pmod p$ such that 

a) For all $n,m,s \in \mathbb{Z}_+$, $$ \displaystyle \frac{A(n+mp^{s+1})}{A([\frac n p]+mp^s)}\equiv \frac{A(n)}{A([\frac n p])}\pmod{p^{s+1}}.$$

b) For all $n \in \mathbb{Z}_+$, $A(n)/A([\frac np ])\in \Z_p$.

c) $A(i)\equiv 0 \pmod p$ for all $\frac{p+1}2\le i<p$.

Then for any $x\in \Z_p$ such that $\sum_{i=0}^{p-1} A(i)x^i\not\equiv 0\pmod p$, there exists a $p$-adic unit $\alpha$, 
depending only on $x \pmod p$, such that for any odd integer $m$ 
%\frac{ \sum_{i=0}^{p^{s+1}-1}A(i)x^i}{ \sum_{i=0}^{p^{s}-1}A(i)x^i}\equiv
$$ 
\frac{ \sum_{i=0}^{\frac{mp^{s}-1}2}A(i)x^{i}}{ \sum_{i=0}^{\frac{mp^{s-1}-1}2}A(i)x^{i}} \equiv \alpha \mod p^{s-d_m}$$
where $d_m = {\rm{max}}_{s\geq 0} \left( {\rm{ord}}_p(\sum_{i=0}^{\frac{mp^{s}-1}2}A(i)x^{i}) \right)$.  
\end{prop}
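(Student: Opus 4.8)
The plan is to run Dwork's $p$-adic congruence method, organized around the base-$p$ digits of the truncation index. Throughout write $u(n):=A(n)/A([n/p])$, $N_s:=(mp^s-1)/2$, and $S_s(y):=\sum_{i=0}^{N_s}A(i)y^i$, so that the quantity to be estimated is $S_s(x)/S_{s-1}(x)$. First I would recast the hypotheses. Since $[(n+kp^{s+1})/p]=[n/p]+kp^s$, condition (a) is precisely the Dwork congruence $u(n+kp^{s+1})\equiv u(n)\pmod{p^{s+1}}$ on the ratios; in particular $u(n)\bmod p$ depends only on the last base-$p$ digit of $n$. Telescoping condition (b) gives $A(n)=A(0)\prod_{t\ge 0}u([n/p^t])$, and condition (c) forces $u(k)\equiv 0\pmod p$ whenever the last digit of $k$ lies in $[\tfrac{p+1}2,p-1]$; combining these yields the valuation bound $\mathrm{ord}_p A(n)\ge \#\{t:\ n_t\ge \tfrac{p+1}2\}$, where $n=\sum_t n_t p^t$. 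This bound both controls $d_m$ and shows the ``upper half'' coefficients are negligible.

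The heart of the regrouping uses the identity $N_s=pN_{s-1}+\tfrac{p-1}2$. Writing $i=j+pi'$ with $0\le j\le p-1$, I split $S_s(x)$ into the complete blocks $i'=0,\dots,N_{s-1}-1$ together with the single half block $i'=N_{s-1}$, in which $j$ runs only from $0$ to $\tfrac{p-1}2$. Factoring $A(j+pi')=u(j+pi')A(i')$ and setting $B_{i'}:=\sum_{j=0}^{p-1}u(j+pi')x^j$, condition (c) shows that the half block agrees with a complete block to the relevant precision, because the omitted indices $j\in[\tfrac{p+1}2,p-1]$ contribute terms with $u(j+pi')\equiv 0$. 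This produces the Frobenius-twisted identity $S_s(x)=\sum_{i'=0}^{N_{s-1}}(x^p)^{i'}A(i')B_{i'}$. Since $B_{i'}\equiv B_0\pmod p$ with $B_0=A(0)^{-1}\sum_{j=0}^{p-1}A(j)x^j$, reducing modulo $p$ and using $x^p\equiv x$ gives the base case $S_s(x)\equiv B_0\,S_{s-1}(x)\pmod p$, so $\alpha\equiv B_0\pmod p$ is the claimed unit.

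Next I would refine the twisted identity to high precision, which is the core Dwork induction. The goal is $S_s(x)\equiv \big(\prod_{t<s}B_0(x^{p^t})\big)S_0(x^{p^s})$ to precision growing with $s$; equivalently, that the ratios $S_{s}(x)/S_{s-1}(x^p)$ stabilize modulo $p^{s}$ up to the valuation defect. This is obtained by a cross-multiplication argument: feeding the regrouping into a product such as $S_{s+1}(x)S_{s-1}(x^p)-S_s(x)S_s(x^p)$, the leading $B_0$-terms cancel and the residual, built from the corrections $B_{i'}-B_0=O(p)$, satisfies a self-similar recursion in which the twist $x\mapsto x^p$ is carried along and an extra power of $p$ is gained at each level.

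Finally I would untwist. Telescoping the refined relation pushes the Frobenius twist out to $x^{p^s}$, and for $p$ odd one has $x^{p^s}\equiv\omega(x)\pmod{p^{s+1}}$, where $\omega(x)$ is the Teichm\"uller representative of $x$; hence $x^{p^s}\equiv x^{p^{s-1}}\pmod{p^s}$ and the residual arguments converge. Dividing the telescoped expressions for $S_s(x)$ and $S_{s-1}(x)$ leaves $B_0(x^{p^{s-1}})\equiv B_0(\omega(x))\pmod{p^s}$ times a ratio of values of $S_0$ that is $\equiv 1$ to the same precision, giving $S_s(x)/S_{s-1}(x)\equiv \alpha\pmod{p^{s-d_m}}$ with $\alpha=B_0(\omega(x))=A(0)^{-1}\sum_{j=0}^{p-1}A(j)\omega(x)^j$, which depends only on $x\bmod p$. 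The main obstacle is the precision bookkeeping in the refinement step: one must verify that each level of the self-similar recursion genuinely gains a power of $p$, that the half-block boundary governed by (c) does not erode this gain, and that the $d_m$ powers lost when dividing by the possibly non-unit sums $S_{s-1}$ are exactly accounted for by the digit-valuation bound from the first step.
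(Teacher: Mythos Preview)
Your overall strategy is Dwork's method and matches the paper's route: both pivot on the Frobenius-twisted relation between $S_{s+1}(x)$ and $S_s(x^p)$, use condition (c) to pass from full sums to half sums, iterate to push the twist out to $x^{p^s}$, and then untwist via the Teichm\"uller lift. The paper does this by quoting Dwork's congruence (2.1) and subtracting off the upper-half block, whereas you rebuild the same congruence from the base-$p$ digit decomposition $i=j+pi'$ and the factorization $A(j+pi')=u(j+pi')A(i')$; these are two presentations of the same mechanism.

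There is, however, a real slip in your refinement step. You assert that the telescoped identity $S_s(x)\equiv\big(\prod_{t<s}B_0(x^{p^t})\big)S_0(x^{p^s})$ holds to precision growing with $s$, and that this is ``equivalently'' the stabilization of the ratios $S_s(x)/S_{s-1}(x^p)$. The stabilization of the ratios is correct and is exactly what the paper proves, but it is \emph{not} equivalent to your product formula: the stabilized ratio is some $\gamma(x)\in\mathbb Z_p[[x]]$ which agrees with $B_0(x)$ only modulo $p$, not to higher precision. Consequently your closed form $\alpha=B_0(\omega(x))=A(0)^{-1}\sum_{j=0}^{p-1}A(j)\omega(x)^j$ is wrong in general (it is only the mod-$p$ value of $\alpha$). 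The paper makes no such claim; it simply identifies $\alpha$ as the $p$-adic limit $\lim_s S_s(\hat x)/S_{s-1}(\hat x)$ at the Teichm\"uller lift $\hat x$, without a closed formula. You should replace the $B_0$-product by the genuine stabilized ratios in the telescoping, and drop the explicit formula for $\alpha$. Also note that the paper handles the $m$-independence of $\alpha$ by a separate argument (rearranging the key congruence as $F(x)/F(x^p)\equiv S_s(x)/S_{s-1}(x^p)$ for every odd $m$ and specializing to $\hat x$); your sketch does not address this point, and your incorrect closed form would have obscured the need for it.
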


The proof implies that $d_m$ is finite, unless $\sum_{i=0}^{\frac{mp^{s}-1}2}A(i)x^{i}=0$ for all $s$.  Also, if 
$\sum_{i=0}^{\frac{mp-1}{2}} A(i)x^i\not\equiv 0\pmod p$, then $d_m=0$; note that $d_1=0$.

\begin{proof} Compare to Theorem 2 of Dwork; we assume $A(n)=B(n)$ (see Dwork's notation) and we add condition c), 
which will allow us to go use sums to $\frac{mp^s-1}2$ instead of to $mp^s-1$. Let $F(x)=\sum_{i\ge 0} A(i)x^i.$ 
Taking the sum of congruence (2.1) in \cite{Dwork} from $m=0$ to $m=n$, we have 
\[F(x)\sum_{j=0}^{(n+1)p^s-1} A(j)x^{pj}\equiv 
F(x^p)\sum_{j=0}^{(n+1)p^{s+1}-1} A(j)x^{j} \mod p^{s+1}\mathbb{Z}_p[[x]].\]

Under the additional assumption c) and the argument of Dwork (Theorem 2), we can obtain that for all integers $n,s\ge 0$
\[F(x)\sum_{j=np^s+\frac{p^s+1}2}^{(n+1)p^s-1} A(j)x^{pj}\equiv 
F(x^p)\sum_{j=np^s+\frac{p^{s+1}+1}2}^{(n+1)p^{s+1}-1} A(j)x^{j} \mod p^{s+1}\mathbb{Z}_p[[x]].\]
(For full details and notation, please see \cite{Dwork}.)
If we subtract the congruences above, we obtain
\begin{equation} \label{Dwork2.1}
F(x)\sum_{j=0}^{\frac{mp^s-1}{2}} A(j)x^{pj}\equiv 
F(x^p)\sum_{j=0}^{\frac{mp^{s+1}-1}{2}} A(j)x^{j} \mod p^{s+1}\mathbb{Z}_p[[x]]
\end{equation}
where $m=2n+1$ could be any odd number.

For fixed $m$, we divide congruence \eqref{Dwork2.1} for consecutive $s$, obtaining 
\begin{equation} \label{ratiocong}
  \frac{\sum_{i=0}^{\frac{mp^{s+1}-1}2}A(i)x^i}{\sum_{i=0}^{\frac{mp^{s}-1}2}A(i)x^{i}}\equiv 
  \frac{\sum_{i=0}^{\frac{mp^{s}-1}2}A(i)x^{ip}}{\sum_{i=0}^{\frac{mp^{s-1}-1}2}A(i)x^{ip}} \mod p^{s-d_m}\mathbb{Z}_p[[x]].
\end{equation}

Consequently, the left-hand side, when viewed as a formal power series of the form $\sum_{n\ge 0}  a_{n,s+1}x^n$,
satisfies that $p^{s-d_m}\mid a_{n,s+1}$ if $p\nmid n$.  Iterating this idea, we have 
$$\frac{\sum_{i=0}^{\frac{mp^{s+1}-1}2}A(i)x^i}{\sum_{i=0}^{\frac{mp^{s}-1}2}A(i)x^{i}}\equiv 
\frac{\sum_{i=0}^{\frac{mp^{s}-1}2}A(i)x^{ip}}{\sum_{i=0}^{\frac{mp^{s-1}-1}2}A(i)x^{ip}}\equiv  
\frac{\sum_{i=0}^{\frac{mp^{s-1}-1}2}A(i)x^{ip^2}}{\sum_{i=0}^{\frac{mp^{s-2}-1}2}A(i)x^{ip^2}} \mod p^{s-1-d_m}\mathbb{Z}_p[[x]].$$ 

So $p^{s-1-d_m} \mid a_{n,s+1}$ if $p^2 \nmid n$. By induction, $p^{s-i-d_m}\mid a_{n,s+1}$ if $p^{1+i}\nmid n$.  %This holds for all $s$.

Using this, we show that the left hand side of \eqref{ratiocong}, for fixed $m$ and $s$, is determined modulo $p^{s+1-d_m}$ by $\l \pmod{p}$, provided that 
$\sum_{i=0}^{p-1}A(i)x^i\not \equiv 0 \pmod p$.   Note that condition b) then implies  that 
$\sum_{i=0}^{p^s-1}A(i)x^i$ is a $p$-adic unit for each $s$.  We show that 
$$\frac{\sum_{i=0}^{\frac{mp^{s+1}-1}2}A(i)x^{i}}{\sum_{i=0}^{\frac{mp^{s}-1}2}A(i)x^{i}}- 
\frac{\sum_{i=0}^{\frac{mp^{s+1}-1}2}A(i)(x+pk)^i}{\sum_{i=0}^{\frac{mp^{s}-1}2}A(i)(x+pk)^i}= 
\sum_{n\geq 0} a_{n,s+1}(x^n-(x+pk)^{n})$$ is congruent to 0 modulo $p^{s+1-d_m}$ for any $k \in \mathbb{Z}_p$.  
Writing $n=p^en'$ with $p\nmid n'$, then  $p^{s-e-d_m}\mid a_{n,s+1}$ 
by the paragraph above, and $x^n-(x+pk)^n \equiv 0 \mod p^{e+1}$ so $p^{s+1-d_m} \mid a_{n,s+1}(x^n-(x+pk)^{n})$ term by term.

Since $x \equiv x^p \pmod{p}$, for fixed $x$ such that $\sum_{i=0}^{p-1}A(i)x^i\not \equiv 0 \pmod p$ 
we can replace congruence (\ref{ratiocong}) with 
\begin{equation} 
  \frac{\sum_{i=0}^{\frac{mp^{s+1}-1}2}A(i)x^i}{\sum_{i=0}^{\frac{mp^{s}-1}2}A(i)x^{i}}\equiv 
  \frac{\sum_{i=0}^{\frac{mp^{s}-1}2}A(i)x^{i}}{\sum_{i=0}^{\frac{mp^{s-1}-1}2}A(i)x^{i}} \mod p^{s-d_m};
\end{equation}
thus there is some $\alpha \in \mathbb{Z}_p$ such that 
$\dfrac{\sum_{i=0}^{\frac{mp^{s}-1}2}A(i)x^i}{\sum_{i=0}^{\frac{mp^{s-1}-1}2}A(i)x^{i}} \equiv \alpha \pmod{p^{s-d_m}}$.

To see that this limit $\alpha$ is independent of $m$ as long as $x \pmod{p}$ remains fixed, we rearrange
congruence \eqref{Dwork2.1} to obtain, for any odd $m$, 
\[ \frac{F(x)}{F(x^p)} \equiv \dfrac{\sum_{i=0}^{\frac{mp^{s}-1}2}A(i) x^i}{\sum_{i=0}^{\frac{mp^{s-1}-1}2}A(i) x^{ip}}
\equiv \dfrac{\sum_{i=0}^{\frac{p^{s}-1}2}A(i) x^i}{\sum_{i=0}^{\frac{p^{s-1}-1}2}A(i) x^{ip}} \pmod{p^{s-d_m}\mathbb{Z}_p[[x]]}. \]

We may choose $\hat x$ to be the Teichmuller lift of $x$, so that $\hat x \equiv x \pmod{p}$ and $\hat x^p = \hat x$.  Then 
$\dfrac{\sum_{i=0}^{\frac{mp^{s}-1}2}A(i) \hat x^i}{\sum_{i=0}^{\frac{mp^{s-1}-1}2}A(i) \hat x^{i}}
\equiv \dfrac{\sum_{i=0}^{\frac{p^{s}-1}2}A(i) \hat x^i}{\sum_{i=0}^{\frac{p^{s-1}-1}2}A(i) \hat x^{i}} \equiv \alpha \pmod{p^{s-d_m}}$.
Thus, the limit $\alpha$ is independent of $m$.
\end{proof}

Now take $A(n)=\left ( \frac{(\frac{1}{2})_n}{n!} \right )^r$ for any integer $r\ge 1$. It follows from \cite{Dwork},  
a), b) and c) of the above  hold. Then $\sum_{j=0}^n A(j)x^j=F_r(\l)_n$. 
\begin{theorem}\label{maintheorem2}For any positive integer $r$, odd prime $p$, and $\l\in \Z_p$ such that 
$F_r(\l)_{\frac{p-1}{2}} \not \equiv 0 \pmod p$, there is a $p$-adic unit $\alpha_{r}$, such that for all integers $s\ge 1$ and $m$ odd
\[
 \frac{F_r(\l)_{\frac{mp^s-1}2}}{F_r(\l)_{\frac{mp^{s-1}-1}2}}\equiv\alpha_r \pmod {p^{s-d_m}}.
\] 
Moreover, the formal group with logarithm $\sum_{n\ge 0} \,  \frac{F_r(\l)_n}{2n+1}\tau^{2n+1}$ is isomorphic over $\mathbb{Z}_p$ to 
the formal group $H^{r-1}(\mathcal X_r,\hat G_{m,\mathcal X_r})$ where $\mathcal X_r(\l): W^2=X_1X_2\cdots X_r(X_1-X_2)(X_2-X_3)\cdots(X_r-\l X_1)$.
\end{theorem}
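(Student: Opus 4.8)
The plan is to treat the two assertions separately: the ratio congruence follows by specializing Proposition \ref{Dworkprop}, while the formal-group statement rests on matching our series to Stienstra's construction. For the congruence I would take $A(n)=\left(\frac{(1/2)_n}{n!}\right)^r$, so that $\sum_{j=0}^n A(j)\lambda^j = F_r(\lambda)_n$, and verify the three hypotheses of Proposition \ref{Dworkprop}. Conditions a) and b) are Dwork's congruences \cite{Dwork}. For c), I would write $(1/2)_i = (2i-1)!!/2^i$ and note that for $\frac{p+1}{2}\le i<p$ the odd double factorial $(2i-1)!!$ picks up exactly the single factor $p$ (from the factor $2\cdot\frac{p-1}{2}+1=p$, the only multiple of $p$ among $1,3,\dots,2i-1$), while $i!$ is prime to $p$; hence $p^r \mid A(i)$ and a fortiori $A(i)\equiv 0 \pmod p$. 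Condition c) then gives $\sum_{i=0}^{p-1}A(i)\lambda^i \equiv F_r(\lambda)_{(p-1)/2}\pmod p$, so the nonvanishing hypothesis of Proposition \ref{Dworkprop} is precisely $F_r(\lambda)_{(p-1)/2}\not\equiv 0$. Applying the proposition with $x=\lambda$ produces the $p$-adic unit I would call $\alpha_r$ and yields the displayed congruence for all $s\ge 1$ and odd $m$.

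For the formal group, the heart of the matter is to compute Stienstra's $H^{r-1}(\mathcal{X}_r,\hat G_{m,\mathcal{X}_r})$ and identify its logarithm. Stienstra \cite{Stienstra} attaches to a variety of this shape a formal group over $\mathbb{Z}_p$ whose logarithm coefficients are read off from the $t$-expansion of a canonical holomorphic differential, exactly as in the Legendre prototype, where \eqref{omegaexpansion}--\eqref{a_k} realize the coefficients as Legendre-polynomial, equivalently terminating hypergeometric, values. I would carry out the corresponding expansion for $\mathcal{X}_r(\lambda)$ in a suitable local parameter $t$ and check that the coefficient attached to $t^{2n+1}$ is $F_r(\lambda)_n$, so that Stienstra's logarithm equals $\sum_{n\ge 0}\frac{F_r(\lambda)_n}{2n+1}\tau^{2n+1}$. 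Since this formal group is defined over $\mathbb{Z}_p$ by construction, the identity of logarithms shows simultaneously that $\ell(\tau)$ is the logarithm of a formal group over $\mathbb{Z}_p$ and that this formal group is isomorphic to $H^{r-1}(\mathcal{X}_r,\hat G_m)$; if Stienstra's normalization differs from ours, the two logarithms will instead be related by a substitution $f\in\mathbb{Z}_p[[\tau]]$ with $f'(0)$ a unit, giving the isomorphism directly.

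As an intrinsic cross-check on integrality, the three-term ASD-type congruences encoded in \eqref{Dwork2.1}---relating $F_r(\lambda)_{(mp^{s+1}-1)/2}$, $F_r(\lambda)_{(mp^s-1)/2}$ and $F_r(\lambda)_{(mp^{s-1}-1)/2}$ in the manner of \eqref{ASDcong}---translate into a Honda-type functional equation for $\ell$, so that Hazewinkel's functional equation lemma independently guarantees $\ell^{-1}(\ell(x)+\ell(y))\in\mathbb{Z}_p[[x,y]]$; this is the formal-group perspective emphasized in the introduction. I expect the genuine obstacle to be the geometric identification: unwinding Stienstra's definition of $H^{r-1}(\mathcal{X}_r,\hat G_m)$ and computing the differential-form expansion to pin its coefficients to $F_r(\lambda)_n$ generalizes the $r=2$ computation \eqref{omegaexpansion} to arbitrary $r$, and is the step where the real work lies. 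The congruence part, by contrast, is a direct corollary of Proposition \ref{Dworkprop}.
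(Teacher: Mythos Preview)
Your treatment of the ratio congruence is exactly the paper's: specialize Proposition~\ref{Dworkprop} to $A(n)=\bigl(\tfrac{(1/2)_n}{n!}\bigr)^r$, with Dwork's work supplying a)--c). Your verification of c) is a nice explicit touch.

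The formal-group half, however, rests on a misconception. Stienstra's logarithm for $H^{r-1}(\mathcal X_r,\hat G_{m,\mathcal X_r})$ does \emph{not} have coefficients $F_r(\lambda)_n$. By Theorem~2 of \cite{Stienstra} the coefficients are the \emph{terminating} hypergeometric values
\[
b_{r,n}(\lambda)\;=\;{}_rF_{r-1}\!\left(\begin{array}{c}-n,\ldots,-n\\1,\ldots,1\end{array};(-1)^r\lambda\right),
\]
not the \emph{truncated} series $F_r(\lambda)_n$. These are genuinely different sequences (compare the $r=2$ prototype \eqref{a_k}, where $a_{2k+1}$ is a terminating ${}_2F_1$, not ${}_2F_1(\tfrac12,\tfrac12;1;\lambda)_k$). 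So the plan to ``check that the coefficient attached to $t^{2n+1}$ is $F_r(\lambda)_n$'' would fail; and while you allow for the possibility that only a $\mathbb{Z}_p$-change of parameter separates the two logarithms, exhibiting such an $f$ directly is not how the paper proceeds, and would in any case require an argument you have not supplied.

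The paper instead argues at the level of $F_p$-types. From the first part, $\ell^{-1}(\ell(x)+\ell(y))$ is integral at $p$ and has $F_p$-type $F_p=\{\alpha_r\}$. Separately, an observation of Koblitz \cite{Koblitz} gives
\[
\lim_{s\to\infty}\frac{b_{r,(mp^s-1)/2}(\hat\lambda)}{b_{r,(mp^{s-1}-1)/2}(\hat\lambda)}
\;=\;
\lim_{s\to\infty}\frac{F_r(\hat\lambda)_{(mp^s-1)/2}}{F_r(\hat\lambda)_{(mp^{s-1}-1)/2}}
\;=\;\alpha_r,
\]
so Stienstra's formal group also has $F_p$-type $\{\alpha_r\}$. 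Two one-dimensional formal groups over $\mathbb Z_p$ with the same $F_p$-type are isomorphic, and that is the isomorphism. The missing ingredient in your proposal is precisely this Koblitz comparison of the two ratio limits; without it, the link between the truncated series $F_r(\lambda)_n$ and Stienstra's terminating-series coefficients $b_{r,n}(\lambda)$ is not established.
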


\begin{proof}
  From the previous proposition, we know the first claim holds. Let $\ell(\tau)=\sum_{n\ge 0} \,  \frac{F_r(\l)_n}{2n+1}\tau^{2n+1}$. 
  It follows that the formal group law $\ell^{-1}(\ell(x)+\ell(y))$ is integral at $p$.

By Theorem 2 of \cite{Stienstra}, there is a 1-dimensional formal group  $H^{r-1}(\mathcal X_r,\hat G_{m,\mathcal X_r})$ with 
logarithm $\sum \frac{b_{r, n}(\l)}{2n+1} \tau^{2n+1}$ where 
  $b_{r,n}(\l)=\, _rF_{r-1}\left ( \begin{array}{ccccc}-n,&\cdots,&-n &  \\ 1,& \cdots,&1\end{array}; (-1)^r\l \right ).$ 
  By an observation of Koblitz \cite{Koblitz}, we know 
  $$\lim_{s\rightarrow \infty}\frac{b_{r,\hat \l, (mp^s-1)/2}}{b_{r, \hat \l, (mp^{s-1}-1)/2}}=
  \lim_{s\rightarrow \infty}\frac{a_{r,\hat \l, (mp^s-1)/2}}{a_{r, \hat \l, (mp^{s-1}-1)/2}}=\alpha_{r} \in \mathbb{Z}_p,$$ 
  which implies that both formal groups are isomorphic over $\mathbb{Z}_p$ to a formal group law with $F_p$-type $F_p = \{\alpha_r\}$, 
  and thus are isomorphic to each other over $\mathbb{Z}_p$. 
\end{proof}
(When $r=3,\l=1$, these formal groups are also isomorphic to a formal Brauer group constructed in \cite{SB}. )

In fact, we expect this formal group law to be integral at every prime except $2$ and all prime 
places $\mathfrak{p}$ such that $\rm{val}_\mathfrak{p}(\l) <0$, but the integrality 
at all ordinary and inert primes follows immediately from Theorem \ref{maintheorem2}.

%\medskip

\section{Supercongruences}
Any formal group law with logarithm $\ell(x)= \sum_{n\geq 1} \frac{a_n}{n}x^n$ that is 
integral at $p$ and that has finite $F_p$-type satisfies an infinite family of 
congruences, which express $a_{mp^s}$ as a linear combination of lower-index coefficients 
$a_n$ modulo $p^s$.  Many of the congruences for hypergeometric series 
can be seen from this perspective; however, the congruences are often much stronger, 
giving a formula for $a_{mp^s}$ modulo $p^{2s}$, $p^{3s}$, or even $p^{4s}$ or more.  
Such congruences, that are stronger than what are predicted by the existence of a 
formal group, are called \emph{supercongruences}.  One source of supercongruences are ASD congruences 
of Fourier coefficients of cusp forms with weight $k>2$; these exhibit congruences of order $p^{(k-1)s}$ \cite{Scholl}.  
We are interested in another source of supercongruences: extra symmetries of the underlying variety, such as 
complex multiplications of the elliptic curves. 

 Here is a well-known example of a supercongruence.  Beukers conjectured that for all odd primes $p$
$$_{4}F_{3}\left(\begin{array}{c}
\frac{1-p}{2},\frac{1-p}{2},\frac{1+p}{2},\frac{1+p}{2}\\
1,1,1
\end{array};1\right)\equiv c_p\pmod {p^2}$$ where the left hand side is the $\frac{p-1}2$th Ap\'ery number $\sum_{k=0}^{(p-1)/2}\binom{(p-1)/2}{k}^2\binom{(p-1)/2+k}{k}^2$ and $c_p$ is the $p$th coefficient of  the weight-4 modular
form $\eta(2z)^4\eta(4z)^4$; this was proved by Alhgren and Ono \cite{AO}. Ahlgren and Ono's approach uses Gaussian hypergeometric functions
(see \cite{AO} and \cite[Chapter 11]{Ono}) and has inspired much later work including Kilbourn's result (\cite{Kilbourn}) that for all primes $p>2$
$$_{4}F_{3}\left(\begin{array}{c}
\frac{1}{2},\frac{1}{2},\frac{1}{2},\frac{1}{2}\\
1,1,1
\end{array};1\right)_{(p-1)/2}\equiv c_p\pmod {p^3}.$$ 

We would like to understand when we should expect supercongruence for our truncated hypergeometric series and have a better understanding for $r=2$ and $3$ cases by using the following theorem of Coster and van Hamme.

\begin{theorem}[Coster and van Hamme, \cite{CosterVanHamme}]\label{CosterVanHammeTheorem}
Let $p$ be an odd prime and $d$ a square-free positive integer such that
$(\frac{-d}{p})=1$.  Let $K$ be an algebraic number field such that $\sqrt{-d}
\in K$ and $K \subset \mathbb{Q}_p$.  Consider the elliptic curve
\[\mathcal{E}: Y^2=X(X^2+AX+B)\] with $A,B \in K$, where $A$ and $\Delta=A^2-4B$ are
$p$-adic units.  Let $\omega$ and $\omega'$ be a basis of periods of $\mathcal{E}$
and suppose that $\tau = \omega'/\omega \in \mathbb{Q}(\sqrt{-d})$, $\tau$ has positive imaginary
part.  Let $\pi, \bar \pi \in \mathbb{Q}(\sqrt{-d})$ be complex conjugates such that
$\pi \bar \pi = p$, with $\bar \pi$ a $p$-adic unit, $\pi=u_1 +v_1 \tau$, and
$\pi \tau = u_2 + v_2 \tau$ with $u_1, v_1, u_2, v_2$ integers and $v_1$ even.
Then we have
\begin{equation}
P_{\frac{mp^r-1}{2}}\left(\frac{A}{\sqrt{\Delta}}\right) \equiv
\varepsilon^{mp^{r-1}} \cdot \bar \pi \cdot
P_{\frac{mp^{r-1}-1}{2}}\left(\frac{A}{\sqrt{\Delta}}\right) \pmod{\pi^{2r}},
\end{equation}
where $m$ and $r$ are positive integers, with $m$ odd, and
$\varepsilon = i^{-u_2v_2 + v_2+p-2}$, where $P_n(x)$ is the $n$th Legendre polynomial.
\end{theorem}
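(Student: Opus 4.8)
The plan is to identify the Legendre-polynomial values with the logarithm coefficients of the formal group of $\mathcal{E}$, and then to read the supercongruence off the complex-multiplication structure of that formal group. Expanding the invariant differential in the local parameter $t = X/Y$ as in \eqref{omegaexpansion}, we may write $-\frac{dX}{2Y} = \sum_{k \ge 0} a_{2k+1}\, t^{2k+1}\,\frac{dt}{t}$, where $a_{2k+1}$ equals $P_k\!\left(\frac{A}{\sqrt{\Delta}}\right)$ up to a fixed power of $\sqrt{\Delta}$ (compare \eqref{a_k}); the associated formal logarithm is $\ell(t) = \sum_{k\ge 0} \frac{a_{2k+1}}{2k+1}\, t^{2k+1}$. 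Since $2\cdot\frac{mp^r-1}{2}+1 = mp^r$, the asserted congruence is precisely the two-term relation $a_{mp^r} \equiv \varepsilon^{mp^{r-1}}\,\bar\pi\, a_{mp^{r-1}} \pmod{\pi^{2r}}$ among these coefficients, once the $\sqrt{\Delta}$-normalization is absorbed into $\varepsilon$.

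First I would install the complex multiplication on $\hat{\mathcal{E}}$. Because $d$ is squarefree with $\left(\frac{-d}{p}\right)=1$, the prime $p$ splits in $\mathbb{Q}(\sqrt{-d})$ as $p=\pi\bar\pi$, and $\mathcal{E}$ has good ordinary reduction; here $\pi$ is a uniformizer of $\mathbb{Z}_p$ while $\bar\pi$ is a unit. Hence $\hat{\mathcal{E}}$ has height $1$, and the CM endomorphisms of $\mathcal{E}$ descend to endomorphisms $[\pi],[\bar\pi]$ of $\hat{\mathcal{E}}$ acting on logarithms by $\ell\circ[\pi]=\pi\,\ell$ and $\ell\circ[\bar\pi]=\bar\pi\,\ell$, with $[\bar\pi]$ a unit-linear automorphism and $[\pi](t)$ reducing to a Frobenius, $[\pi](t)\equiv(\text{unit})\,t^p\pmod{\pi}$. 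The data $\pi=u_1+v_1\tau$, $\pi\tau=u_2+v_2\tau$ record the matrix of multiplication-by-$\pi$ on the lattice $\mathbb{Z}+\mathbb{Z}\tau$, and I would feed this into the transformation law of $-\frac{dX}{2Y}$ under $[\pi]$ to pin down the accompanying automorphy constant; this is where the root of unity $\varepsilon=i^{-u_2v_2+v_2+p-2}$ is produced, the power of $i$ reflecting the half-integral-weight behaviour of $\sqrt{\Delta}$ and the parity hypothesis that $v_1$ be even.

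Next I would extract coefficient congruences from the functional equation $\ell\circ[\pi]=\pi\,\ell$. Comparing the two sides in Honda style, using $[\pi](t)\equiv(\text{unit})\,t^p\pmod{\pi}$, yields the three-term ASD congruences \eqref{ASDcong} for the $a_n$; in the split ordinary case these factor through the $F_p$-type $F_p=\{\bar\pi\}$ into the two-term relation $a_{mp^r}\equiv\bar\pi\,a_{mp^{r-1}}\pmod{\pi^{r}}$, which is exactly the precision the formal group supplies on its own. The entire content of the theorem is the extra factor $\pi^{r}$. I would try to gain it by exploiting the second endomorphism: passing to the canonical CM coordinate in which $[\bar\pi]$ acts linearly, so that iterating the $[\pi]$-functional equation against the $[\bar\pi]$-action makes successive relations reinforce rather than merely repeat, lifting the modulus from $\pi^{r}$ to $\pi^{2r}$; tracking the normalizing constant through each application of $[\pi]$ then produces $\varepsilon^{mp^{r-1}}$.

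The hard part will be exactly this doubling of precision. The supercongruence modulo $\pi^{2r}$ is genuinely stronger than anything the formal group law guarantees, so producing the extra $\pi^{r}$ must use the complex multiplication in an essential way and not merely the height-$1$ structure; making the ``reinforcement'' above precise is the crux of the argument. A secondary, purely bookkeeping obstacle is the exact evaluation of $\varepsilon$: one must follow every sign and root of unity through the change of basis $(1,\tau)\mapsto(\pi,\pi\tau)$, the Teichm\"uller normalization of the argument $\frac{A}{\sqrt{\Delta}}$, and the parity condition on $v_1$, which is delicate but routine once the structural congruence modulo $\pi^{2r}$ is in hand.
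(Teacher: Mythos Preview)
The paper does not contain a proof of this theorem. Theorem~\ref{CosterVanHammeTheorem} is quoted verbatim from Coster and van Hamme's paper \cite{CosterVanHamme} and used as a black box; the surrounding text only records remarks on its hypotheses (that they imply CM, that the technical conditions on $A$ and $\Delta$ are satisfied in the cases of interest, and that $\varepsilon^{mp^{r-1}}$ is a quartic character) and then applies the result to the families $L_\l$ and $E_\l$. So there is nothing in the paper to compare your proposal against.

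As a stand-alone sketch of the Coster--van Hamme argument, your outline is reasonable in spirit but does not yet constitute a proof. You correctly identify the structural framework: the Legendre-polynomial values are the logarithm coefficients of $\hat{\mathcal E}$, the ordinary ASD congruences give the relation modulo $\pi^r$, and the supercongruence must come from the CM endomorphisms. However, you explicitly flag the doubling of precision from $\pi^r$ to $\pi^{2r}$ as ``the crux of the argument'' and then do not carry it out; the phrase ``iterating the $[\pi]$-functional equation against the $[\bar\pi]$-action makes successive relations reinforce rather than merely repeat'' is a hope, not a mechanism. In Coster and van Hamme's actual proof this step is not formal-group bookkeeping but an analytic computation with the Weierstrass $\sigma$-function and its transformation law under CM isogenies, which is where the precise power of $i$ in $\varepsilon$ and the extra factor of $\pi^r$ both emerge. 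If you want to complete the argument you will need either to reproduce that $\sigma$-function calculation or to find a genuinely new source for the extra $\pi^r$.
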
Note that the assumptions
imply that $\mathcal{E}$ has CM,  $A=3\wp(\frac{1}{2}\omega)$, $\sqrt{\Delta}=\wp(\frac{1}{2}\omega'
+ \frac{1}{2} \omega) - \wp(\frac{1}{2}\omega')$, where $\wp(z)$ is the Weierstrass
$\wp$-function. The technical details such as $A$ and $D$ being $p$-adically integral are always satisfied in our cases, and
$\varepsilon^{mp^{r-1}}$ is just a quartic character, which we can explicitly identify.  The main point of the theorem is the
existence of supercongruences  arising from an elliptic curve $\mathcal E$ with CM.
While Coster and van Hamme interpreted the congruence as inclusion in an ideal of the ring
of integers of $K$, we interpret all of our congruences $p$-adically and simply replace $\pmod{\pi^{2r}}$ with $\pmod{p^{2r}}$.

Thus, for CM curves $L_\l$, we can double the strength of the congruences (\ref{2F1cong}).  The factor $(\lambda - 1)^k$ from the 
formulas \eqref{a_k} and \eqref{omegaexpansion} only shows up as a quadratic character in the congruences \eqref{2F1cong}, because 
$(\l-1)^{\frac{mp^s-mp^{s-1}}{2}} \equiv \left( \frac{\l-1}{p} \right) \pmod{p^s}$; however for supercongruences modulo $p^{2s}$, 
it must be included.  If $\l$ is a CM value for the family $E_\l$, then at ordinary primes
 $p$, for all  $m, s \geq 1$ with $m$ odd, we have 
\begin{align*}
& {
_2F_1\left(\begin{smallmatrix}\tfrac{1-mp^s}{2}, \; \tfrac{1 - m p^s}{2} \\ 1 \end{smallmatrix}; \l \right)
} \\ & \equiv \left( \frac{1 - \l }{p} \right)  \left( \l-1 \right)^{\frac{m p^s-mp^{s-1}}{2}} \beta_{p,\l} \; {
_2F_1\left(\begin{smallmatrix}\tfrac{1-mp^{s-1}}{2}, \; \tfrac{1 - mp^{s-1}}{2} \\ 1 \end{smallmatrix}; \l \right)
} \pmod{p^{2s}}
\end{align*}  and 
\begin{equation*}
{
_2F_1\left(\begin{smallmatrix}\tfrac{1-mp^s}{2}, \; \tfrac{1 + m p^s}{2} \\ 1 \end{smallmatrix}; \l \right)
} \equiv  \left( \frac{-1}{p} \right) \beta_{p,\l} \; {
_2F_1\left(\begin{smallmatrix}\tfrac{1-mp^{s-1}}{2}, \; \tfrac{1 + mp^{s-1}}{2} \\ 1 \end{smallmatrix}; \l \right)
} \pmod{p^{2s}}.
\end{equation*}  
   When $m=s=1$, we have
\begin{align*}
\sum_{k=0}^{\frac{p-1}2} \binom{2k}{k}^2\left(\frac{\l}{16}\right)^k
& \equiv \sum_{k=0}^{\frac{p-1}2} \binom{\frac{p-1}2+k}{k}\binom{\frac{p-1}2}{k}(-\l)^k & \pmod{p^2} \\
& \equiv \left( \frac{\l - 1}{p}\right) \left( \frac{1}{\l-1}\right)^{\frac{p-1}{2}} \sum_{k=0}^{\frac{p-1}2} \binom{\frac{p-1}2}{k}^2 \l^{k} & \pmod {p^2}.
\end{align*}
These  generalize the supercongruences in \cite{CLZ} with $\l=2$ and
as well as one of Mortenson's supercongruences with $\l =1$, in which $L_\l$ degenerates \cite{Mort}.

Theorem \ref{maintheorem} relates values of the truncated hypergoemetric function $F_3(\l)_n$ to the family of K3 surfaces
$S_\lambda: z^2=xy(x-1)(y-1)(x-\lambda y)$ and the family of elliptic curves $E_\l: y^2=(x-1)(x^2-\frac{1}{1-\l})$.  Let  $A_p(\lambda)=\#S_\lambda(\mathbb{F}_p)-p^2-1$. 
Then $F_3(\l)_{\frac{p-1}{2}} =  A_p(\l) \pmod p$.
The variation of the complex structure of the family $S_\lambda$ of K3 surfaces is again depicted by its Picard-Fuchs differential equation, which is projectively equivalent to the symmetric square of
the Picard-Fuchs equation of  $E_\lambda: y^2=(x-1)(x^2-\frac 1{1-\lambda})$ \cite{Long}.
In terms of arithmetic,  if we let
$a_p(\lambda)=p+1-\#E_\lambda(\mathbb{F}_p)$, then $A_p(\lambda)=\left(\frac{1-\l}{p}\right)(a_p(\lambda)^2-p)$ \cite{AOP}.

%Putting these facts together, it is easy to see that $\sum_{k=0}^{(p-1)/2} \binom{2k}{k}^3 \left(\frac{\lambda}{64}\right)^k
%\equiv \left(\frac{1-\l}{p}\right) a_p(\lambda)^2\pmod{p}$. 

%We first establish a congruence between values of $_3F_2$ and
%values of Legendre polynomials, then express the coefficients of the
%(unique up to a scalar multiple) holomorphic differential form of $E_{\l}$ in terms of Legendre polynomials.
%Finally, we use Theorem \cite{CosterVanHamme} of Coster and van Hamme to establish supercongruences when $E_\l$ has complex
%multiplication.

Specializing formula (1) of
\cite{CosterVanHamme} to  $E_\l$ and using uniformizer $t= \frac{x-1}{y}$ as a local parameter at the
point at infinity, we obtain the expansion
\begin{equation}\label{omegaexpansion2}
 -\frac{dx}{2y} = \sum_{k=0}^\infty P_k\left( \sqrt{1-\lambda} \right) \left(\frac{2}{\sqrt{1-\l}}\right)^k t^{2k+1} \frac{dt}{t}.
 \end{equation}
The coefficients of this differential form on $E_\l$ satisfy ASD congruences, but our interest is in supercongruences for hypergeometric 
functions $_3F_2$, which we obtain using Z.-H. Sun's identity (1.7) \cite{Sun} (with
$x= - \frac{\lambda}{4}$) 
\[\sum_{k=0}^{n} \binom{2k}{k}^2 \binom{n+k}{2k} \left(- \frac{\lambda}{4}\right)^k = 
P_{n}(\sqrt{1-\l})^2. \]

With the identity
\begin{equation} \label{ZHSunidentity}
 _3F_2\left(\begin{smallmatrix}\tfrac{1}{2}, \; \tfrac{1-n}{2}, \; \tfrac{1 + n}{2} \\ 1, \; 1 \end{smallmatrix}; \l \right)
=\sum_{k=0}^n \binom{2k}{k}^2 \binom{n+k}{2k} \left(\frac{-\l}{4}\right)^k  \end{equation}
along with Theorem \ref{CosterVanHammeTheorem}, we obtain the following supercongruences.

\begin{proposition} For CM values $\l$ of the family $E_\l: y^2=(x-1)(x^2-\frac{1}{1-\l})$, such that $\l \in \mathbb{Z}_p$ and $p$ is ordinary,
for all positive integers $m$ and $s$ with $m$ odd,
\begin{align*}
& _3F_2\left(\begin{smallmatrix}\tfrac{1}{2}, \; \tfrac{1-mp^s}{2}, \; \tfrac{1 + mp^s}{2} \\ 1, \; 1 \end{smallmatrix}; \l \right) \\
& \equiv  \left( \left( \frac{\l-1}{p} \right)\alpha_{p,\lambda}{}^2\right)   {}
_3F_2\left(\begin{smallmatrix}\tfrac{1}{2}, \; \tfrac{1-mp^{s-1}}{2}, \; \tfrac{1 + mp^{s-1}}{2} \\ 1, \; 1 \end{smallmatrix}; \l \right)
  \pmod{p^{2s}}
\end{align*}
where $\alpha_{p,\l}$ is the unit root of $X^2-[p+1-\#(E_\l/{\mathbb F}_p)]X+p=0$.
\end{proposition}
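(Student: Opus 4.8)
The plan is to reduce the statement to the Legendre-polynomial supercongruences of Theorem~\ref{CosterVanHammeTheorem}. First I would combine identity~\eqref{ZHSunidentity} with Z.-H. Sun's identity. Setting $N=\frac{mp^{s}-1}{2}$, the upper parameters satisfy $\frac{1-mp^{s}}{2}=-N$ and $\frac{1+mp^{s}}{2}=N+1$, so the series terminates at $k=N$ and
\[
{}_3F_2\!\left(\begin{smallmatrix}\tfrac12,\,-N,\,N+1\\ 1,\,1\end{smallmatrix};\lambda\right)
=\sum_{k=0}^{N}\binom{2k}{k}^{2}\binom{N+k}{2k}\left(\tfrac{-\lambda}{4}\right)^{k}
=P_{N}\!\left(\sqrt{1-\lambda}\right)^{2}.
\]
The same identity with $s-1$ and $N'=\frac{mp^{s-1}-1}{2}$ in place of $s$ and $N$ rewrites the target congruence as a relation between $P_{N}(\sqrt{1-\lambda})^{2}$ and $P_{N'}(\sqrt{1-\lambda})^{2}$ modulo $p^{2s}$.

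Next I would verify that $E_\lambda$ meets the hypotheses of Theorem~\ref{CosterVanHammeTheorem}. The substitution $X=x-1$ puts $E_\lambda$ in the form $Y^{2}=X(X^{2}+AX+B)$ with $A=2$ and $B=\frac{-\lambda}{1-\lambda}$, whence $\Delta=A^{2}-4B=\frac{4}{1-\lambda}$ and $\frac{A}{\sqrt{\Delta}}=\sqrt{1-\lambda}$, exactly the argument appearing above. Good ordinary reduction forces $1-\lambda$ to be a $p$-adic unit, so $A$ and $\Delta$ are units; since $E_\lambda$ has CM and $p$ is ordinary, $p$ splits in the CM field, giving $\left(\tfrac{-d}{p}\right)=1$ and the embedding $\sqrt{-d}\in K\subset\mathbb Q_p$. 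Theorem~\ref{CosterVanHammeTheorem} with $r=s$ then yields
\[
P_{N}(\sqrt{1-\lambda})\equiv\varepsilon^{mp^{s-1}}\,\bar\pi\,P_{N'}(\sqrt{1-\lambda})\pmod{\pi^{2s}},
\]
where $\pi\bar\pi=p$, $\bar\pi$ is a $p$-adic unit, and $\varepsilon=i^{-u_2v_2+v_2+p-2}$; as $v_p(\pi)=1$, the modulus $\pi^{2s}$ is $p^{2s}$ $p$-adically.

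I would then square this congruence. Squaring preserves the modulus, so $P_{N}^{2}\equiv(\varepsilon^{mp^{s-1}}\bar\pi)^{2}P_{N'}^{2}\pmod{p^{2s}}$, which by the first paragraph is precisely the asserted congruence with multiplier $\varepsilon^{2mp^{s-1}}\bar\pi^{2}$. The unit-root factor is immediate: by CM theory the Frobenius eigenvalues of $E_\lambda$ are $\pi,\bar\pi$ with $\pi+\bar\pi=p+1-\#E_\lambda(\mathbb F_p)$, so $\{\pi,\bar\pi\}$ are the roots of $X^{2}-[p+1-\#(E_\lambda/\mathbb F_p)]X+p$; since $\bar\pi$ is the $p$-adic unit and $p$ is ordinary, $\bar\pi=\alpha_{p,\lambda}$ and $\bar\pi^{2}=\alpha_{p,\lambda}^{2}$. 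Because $\varepsilon$ is a power of $i$, $\varepsilon^{2}=\pm1$, and as $mp^{s-1}$ is odd, $\varepsilon^{2mp^{s-1}}=\varepsilon^{2}$, a sign independent of $m$ and $s$.

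The hard part will be showing $\varepsilon^{2}=\left(\tfrac{\lambda-1}{p}\right)$. One route is direct: evaluate $\varepsilon=i^{-u_2v_2+v_2+p-2}$ from the CM-lattice data $(u_2,v_2)$ of Theorem~\ref{CosterVanHammeTheorem} and reduce the square of this quartic symbol to a quadratic symbol of $1-\lambda$, the $i^{p-2}$ contributing a factor $\left(\tfrac{-1}{p}\right)$. Alternatively, since $\varepsilon^{2}$ is a fixed sign, I would fix it from the base case $m=s=1$, where $N'=0$ and $P_0=1$: reducing the squared congruence modulo $p$ gives $P_{(p-1)/2}(\sqrt{1-\lambda})^{2}\equiv\varepsilon^{2}\alpha_{p,\lambda}^{2}\pmod p$, while the coefficientwise congruence $P_{(p-1)/2}(\sqrt{1-\lambda})^{2}\equiv F_3(\lambda)_{(p-1)/2}\pmod p$, the point-count formula $A_p(\lambda)=\left(\tfrac{1-\lambda}{p}\right)(a_p(\lambda)^{2}-p)$, and $\alpha_{p,\lambda}^{2}\equiv a_p(\lambda)^{2}\pmod p$ determine $\varepsilon^{2}$. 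Either way the delicate point is the quadratic-character bookkeeping: the point-count route naturally produces $\left(\tfrac{1-\lambda}{p}\right)$, so landing on $\left(\tfrac{\lambda-1}{p}\right)$ requires carefully tracking a factor of $\left(\tfrac{-1}{p}\right)$ (a Deuring-type sign, as in the Legendre-family relation $A_p(\lambda)\equiv(-1)^{(p-1)/2}\,{}_2F_1(\ldots)_{(p-1)/2}$) together with the parities of $N$ and $N'$.
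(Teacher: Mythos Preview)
Your approach is essentially the paper's own: the paper states just before the Proposition that ``with the identity~\eqref{ZHSunidentity} along with Theorem~\ref{CosterVanHammeTheorem}, we obtain the following supercongruences,'' and that is the whole proof it gives for this Proposition. You carry out exactly these two steps---rewrite the terminating ${}_3F_2$ as $P_N(\sqrt{1-\lambda})^2$ via Sun's identity and~\eqref{ZHSunidentity}, then apply Coster--van Hamme and square---and you are more explicit than the paper about putting $E_\lambda$ into the form $Y^2=X(X^2+AX+B)$ and checking that $A,\Delta$ are units.

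For the identification of the multiplier, your second route (fix the sign $\varepsilon^2$ from the base case $m=s=1$ using the mod~$p$ ASD information) is precisely what the paper does in the proof of Theorem~\ref{maintheorem}: there the ASD congruence for the coefficients in the expansion~\eqref{omegaexpansion2} pins down $\varepsilon$ as the fourth root of unity congruent to $\bigl(\tfrac{\sqrt{1-\lambda}}{2}\bigr)^{(p-1)/2}$ modulo~$p$, hence $\varepsilon^2\equiv\bigl(\tfrac{1-\lambda}{p}\bigr)$. Your flag about the discrepancy between $\bigl(\tfrac{1-\lambda}{p}\bigr)$ and $\bigl(\tfrac{\lambda-1}{p}\bigr)$ is well taken; the paper does not spell out how the extra $\bigl(\tfrac{-1}{p}\bigr)$ is absorbed, and this is indeed the one place where careful bookkeeping is needed.
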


Turning our attention to $S_\l$ and the sequence $F_3(\l)_n $, we use the 
congruence $\binom{\frac{p-1}{2}+k}{2k} \equiv \bigl(\frac{-1}{16}\bigr)^k \binom{2k}{k} \pmod{p^2}$ and the equality 
$\frac{(1/2)_k}{k!} = \binom{2k}{k} \frac{1}{4^k}$ in \eqref{ZHSunidentity} to obtain 
\[ F_3(\l)_{\frac{p-1}{2}} := {} _3F_2\left ( \begin{array}{cccc}1/2,&1/2,&1/2 \\ &1&1\end{array};\l \right )_{\frac{p-1}2}
\equiv P_{\frac{p-1}{2}}(\sqrt{1-\l})^2 \pmod{p^2} .\]
Notice that this can also be obtained directly from Clausen's formula expressing certain values of $_3F_2$ 
as the square of values of $_2F_1$.

%We consider the truncated hypergeometric series
%\[ {}_3F_2(\begin{smallmatrix}1/2, \; 1/2, \; 1/2 \\ 1, \quad 1 \end{smallmatrix}; \lambda)_{(p-1)/2}
%    := \sum_{k=0}^{(p-1)/2} \left( \frac{(1/2)_k}{k!}\right)^3 \lambda^k =
%\sum_{k=0}^{(p-1)/2} \binom{2k}{k}^3 \left(\frac{\lambda}{64}\right)^k, \]
%substituting the following calculation modulo $p^2$:
%\begin{eqnarray*}
%  \binom{ \frac{p-1}{2} + k}{2k}& =& \frac{ \frac{p+1-2k}{2} \cdot \cdot \cdot \frac{p-1}{2}
%\frac{p+1}{2} \cdot \cdot \cdot \frac{p-1+2k}{2}}{1 \cdot 2 \cdot \cdot \cdot 2k}\\& =& \frac{1}{4^k}
%\frac{ (p^2-1)(p^2-4) \cdot \cdot \cdot (p^2-(2k-1)^2)}{1 \cdot 2 \cdot \cdot \cdot 2k}\\
%&\equiv& \frac{1}{4^k} \frac{(-1)(-2^2) \cdot \cdot \cdot (-(2k-1)^2)}{1 \cdot 2 \cdot \cdot \cdot 2k} (\text{mod \ } p^2)\\
%&\equiv& \left (\frac{-1}{4} \right )^k \frac{1 \cdot 3 \cdot \cdot \cdot (2k-1)}{2 \cdot 4 \cdot \cdot \cdot (2k-2)(2k)}\\
%&\equiv&  \left (\frac{-1}{2^3} \right )^k \frac{(2k)!}{k!k!2^k} \equiv \left (\frac{-1}{16}\right )^k \binom{2k}{k}(\text{mod \ } p^2).
%\end{eqnarray*}

%So, $$  \sum_{k=0}^{\frac{p-1}{2}} \binom{2k}{k}^3 (\frac{\lambda}{64})^k \equiv
%\sum_{k=0}^{\frac{p-1}{2}} \binom{2k}{k}^2 \binom{\frac{p-1}{2}+k}{2k}\left (\frac{- \lambda}{4} \right )^k (\text{mod \ } p^2).$$

%This shows that the truncated hypergeometric series is congruent to an Ap\'ery-like series
%modulo $p^2$.

\begin{pot}
We begin by observing that 
\[ P_{\frac{p-1}{2}}\left( \sqrt{1-\lambda} \right)  \equiv 
\left\{ \begin{array}{lll}  0 &\hspace{-0.2in} \pmod{p}  \qquad & \text{if }p\text{ is supersingular}
\\ \varepsilon \alpha_{p,\l} &\hspace{-0.2in} \pmod{p^2} \qquad & \text{if }p\text{ is ordinary} \end{array} \right.  \]
where $\varepsilon$ is a fourth root of unity.
The congruence in the supersingular case is just the first instance of the standard ASD congruences.  Since we are assuming 
$\l$ is a CM value of $E_\l$, we have supercongruences at ordinary primes by 
Theorem \ref{CosterVanHammeTheorem}.
By the ASD congruences, we may conclude that $\varepsilon$ is the fourth root of unity in $\mathbb{Q}_p$, or
possibly in its unramified quadratic extension, that is congruent to
$\left(\frac{\sqrt{1-\l}}{2}\right)^{\frac{p-1}{2}}$ modulo $p$.
Squaring these congruences for $P_{\frac{p-1}{2}}\left( \sqrt{1-\lambda} \right)$ immediately yields Theorem \ref{maintheorem}:
$$_3F_2\left ( \begin{array}{c}\frac{1}{2}, \frac{1}{2}, \frac{1}{2}  \\  1, 1\end{array}; \l \right )_{\frac{p-1}2}=\sum_{k=0}^{\frac{p-1}{2}}
\left(\frac{(\frac 12)_k}{k!}\right)^3\l^k\equiv \left( \frac{\l-1}{p} \right)\alpha_{p,\lambda}{}^2 \pmod {p^2}.$$
Many of the intermediate statements in this argument involve choosing an embedding of $\sqrt{1-\lambda}$ in $\mathbb{Q}_p$ or its unramified quadratic
extension, but this choice does not affect the square of the congruence.
\end{pot}

We note that this establishes,   
%Conjecture 5.3 of \cite{ZWSun2} and 
modulo $p^2$, all cases of Conjecture 5.2 of \cite{ZWSun3} by Z.W. Sun.  
These conjectures can be written as  
$$ \sum_{k=0}^{\frac{p-1}{2}} \binom{2k}{k}^3  \left(\frac{\l}{64}\right)^k \equiv 
\left\{ \begin{matrix}  \left(\frac{c}{p}\right)(4a^2-2p) \pmod{p^2}  \hfill
& \quad {\text{if }} (\frac{p}{D})=1 
{\text{ where }} a^2 + Db^2 = p 
\\ 0 \pmod{p^2} \hfill & \quad {\text{if }} (\frac{p}{D})=-1 \hfill \end{matrix} \right. , $$
with appropriate choices of $D \in \mathbb{Z}_+$ and character $\left(\frac{c}{p}\right)$.  Note that 
$\sum_{k=0}^{\frac{p-1}{2}} \binom{2k}{k}^3  \left(\frac{\l}{64}\right)^k = {}
_3F_2\left (\begin{array}{c}
\frac{1}{2},\frac{1}{2},\frac{1}{2}\\
1,1 \end{array}; \l \right )_{\frac{p-1}{2}}$ 
 via the identity 
$\frac{(1/2)_k{}^3}{k!^3} = \binom{2k}{k}^3 \frac{1}{64^k}$.  
These conjectures address the $\l$-values $\l = -8, 1, -\frac{1}{8}, 4, \frac{1}{4}, 64, \frac{1}{64}, -1$, which are 
all of the CM values for $E_\l$ over $\mathbb{Q}$, as verified in \cite{AOP}, with the exception of the degenrate case $\l=1$, 
for which $E_\l$ is not an elliptic curve.  The supercongruence for $\l=1$ was proved by Van Hamme in \cite{VH} and by Ono 
in \cite{Ono}. 

If $E_\l$ has CM over $K=\Q(\sqrt{-D})$, then the attached 2-dimensional
representation $\rho$ decomposes into 2 Grossencharacters when $\rho$ is
restricted to ${\rm{Gal}}(\overline{\mathbb{Q}}/K)$. Then at splitting
primes $p$, which are precisely the ordinary primes of $E_\l$, the trace of the Frobenius is 
$A_p=\alpha_p+\beta_p$, where both $\alpha_p$ and $\beta_p$ are in
the ring of integers of the quadratic field K and have the same
absolute value $\sqrt{p}$. In the case that $K$ has class number 1, (all Sun $\l$ values
correspond to class number 1 cases), then ideals ($\alpha_p$) and
($\beta_p$) are the two distinct prime ideals above $p$. That is,
$\alpha_p=a+b\sqrt{-D}$ and $\beta_p = a-b\sqrt{-D}= \frac{p}{\alpha_p}$, where $a$ and $b$ are integers or half integers depending on
$p \equiv 1 \text{ or } 3 \pmod 4$, such that $a^2+b^2D=p$.  Our congruences involve
$\alpha_p^2$, which is just $a^2-Db^2+2ab\sqrt{-D}$.  Using $\beta_p^2 = a^2 -Db^2 -2ab\sqrt{-D} \equiv 0 \pmod{p^2}$ and $a^2 +b^2D =p$, 
we have $\alpha_p^2 \equiv 4a^2-2p \pmod{p^2}$, which, along with the character $\left( \frac{1-\l}{p} \right)$, is the target 
of Z.-W. Sun's congruences.  In the supersingular case, we simply have $\alpha_p=0$.  

Alternately, we note that Ono has explicitly identified the values $\alpha_p$, for all CM curves $E_\l$ with $\l \in \mathbb{Z}$, 
in Theorem 6 of \cite{Ono}.  These values $\alpha_p$ determine the formal group structure and the ASD congruences (i.e., that 
$a_p \equiv \left(\frac{1-\l}{p}\right) \alpha_p^2 \pmod{p}$);
combining this with Coster and Van Hamme's supercongruences gives another proof of Sun's conjectures, that 
$a_p \equiv \left(\frac{1-\l}{p}\right) \alpha_p^2 \pmod{p^2}$.
%%%%%%%%%%%%%%%%%%%%%%%%%%%%%%%%%%%

Note that we have established just the first interesting supercongruence for the truncated hypergeometric functions 
$_3F_2\left( \begin{smallmatrix}\tfrac{1}{2}, \; \tfrac{1}{2}, \; \tfrac{1}{2} \\ 1, \; 1 \end{smallmatrix}; \l \right)_{\tfrac{k-1}{2}}$, when $k=p$;
we have congruence modulo $p^2$ where the standard ASD congruences only guarantee congruence modulo $p$.
However, when $\lambda \in \mathbb{Q}$ is a CM value, these congruences actually appear to hold modulo $p^3$ for odrinary primes $p$.
Further, we expect the whole infinite family of
supercongruences to hold for
$_3F_2\left( \begin{smallmatrix}\tfrac{1}{2}, \; \tfrac{1}{2}, \; \tfrac{1}{2} \\ 1, \; 1 \end{smallmatrix}; \l \right)_{\tfrac{p-1}{2}}$:
\begin{conjecture} \label{maintheoremconj}
For CM values $\l$ of the family $E_\l: y^2=(x-1)(x^2-\frac{1}{1-\l})$, such that $\l \in \mathbb{Z}_p$ and $p$ is ordinary,
for all positive integers $m$ and $s$ with $m$ odd,
\begin{align*}
& _3F_2\left ( \begin{matrix}\tfrac{1}{2},&\tfrac{1}{2},&\tfrac{1}{2} \\ &1&1\end{matrix};\l \right )_{\frac{mp^s-1}2} \\
& \equiv \left( \left( \frac{\l-1}{p} \right)\alpha_{p,\lambda}{}^2 \right) {}
_3F_2\left ( \begin{matrix} \tfrac{1}{2},&\tfrac{1}{2},&\tfrac{1}{2}\\ &1&1\end{matrix};\l \right )_{\frac{mp^{s-1}-1}2} \pmod{p^{2s}}
\end{align*}
where $\alpha_{p,\l}$ is the unit root of $X^2-[p+1-\#(E_\l/{\mathbb F}_p)]X+p=0$.
\end{conjecture}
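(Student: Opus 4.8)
The plan is to prove Conjecture \ref{maintheoremconj} by the mechanism that already yields Theorem \ref{maintheorem}, but carried out uniformly in $s$: combine a \emph{truncated squaring identity} relating $F_3(\l)_{(mp^s-1)/2}$ to a squared Legendre polynomial with the full two-parameter Coster and van Hamme congruence of Theorem \ref{CosterVanHammeTheorem}. The crucial point is that Theorem \ref{CosterVanHammeTheorem} already supplies the arithmetic in the exact shape we want: taking $n=\tfrac{mp^s-1}{2}$ and squaring gives
\[ P_{\frac{mp^s-1}{2}}(\sqrt{1-\l})^2 \equiv \varepsilon^{2mp^{s-1}}\,\bar\pi^2\, P_{\frac{mp^{s-1}-1}{2}}(\sqrt{1-\l})^2 \pmod{p^{2s}}. \]
Since $\varepsilon$ is a fourth root of unity and $mp^{s-1}$ is odd, $\varepsilon^{2mp^{s-1}}=\varepsilon^2$ is a sign which, by the character bookkeeping in the proof of Theorem \ref{maintheorem}, collapses to $\bigl(\tfrac{\l-1}{p}\bigr)$, while $\bar\pi^2\equiv\alpha_{p,\l}^2\pmod{p^{2s}}$ comes from identifying $\bar\pi$ with the unit root through the Grossencharacter description recorded in the excerpt. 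Thus the entire CM input is in hand, and the burden is to establish the truncated identity to full precision $p^{2s}$; I emphasize that the formal-group machinery of Proposition \ref{Dworkprop} and Theorem \ref{maintheorem2} cannot replace this step, as it only produces the weaker modulus $p^{s}$ and so misses the CM-driven doubling.

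First I would prove the truncated identity $F_3(\l)_{(mp^s-1)/2}\equiv P_{(mp^s-1)/2}(\sqrt{1-\l})^2 \pmod{p^{2s}}$ for the top index. By the exact identity \eqref{ZHSunidentity} this reduces to a term-by-term comparison of $\binom{(mp^s-1)/2+k}{2k}$ with $(-1/16)^k\binom{2k}{k}$, and the natural tool is the product expansion
\[ \binom{\tfrac{mp^s-1}{2}+k}{2k} = (-1/16)^k\binom{2k}{k}\prod_{j=-k+1}^{k}\Bigl(1+\frac{mp^s}{2j-1}\Bigr), \]
valid whenever the factors $2j-1$ are $p$-adic units. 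The leading factor is exactly the $F_3$-term, and the first-order correction $mp^s\sum_{j}\tfrac{1}{2j-1}$ vanishes because the index set $\{-k+1,\dots,k\}$ is symmetric; this is precisely what pushes the precision from $p^{s}$ to $p^{2s}$ and reproves the $s=1$ reduction used in Theorem \ref{maintheorem}. The indices $k$ for which some $2j-1$ is divisible by $p$ must be treated separately, since there $\binom{(mp^s-1)/2+k}{2k}$ genuinely acquires extra $p$-divisibility; one controls them by tracking $\mathrm{ord}_p\binom{2k}{k}$ (the carry count) against the divisibility of the singular factors.

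The hard part will be closing the induction at the \emph{lower} index. To deduce the conjecture one needs $P_{(mp^{s-1}-1)/2}(\sqrt{1-\l})^2\equiv F_3(\l)_{(mp^{s-1}-1)/2}\pmod{p^{2s}}$, but the analogous product expansion at the lower index (with $mp^{s-1}$ in place of $mp^s$) has a surviving second-order correction of size $(mp^{s-1})^2\sum_{j}\tfrac{1}{(2j-1)^2}$, which caps the regular-range precision at $p^{2(s-1)}$, exactly $p^2$ short of what is required. Writing out this deficit, the conjecture becomes equivalent to the vanishing modulo $p^2$ of a harmonic-weighted binomial sum of the form $\sum_k \binom{2k}{k}^3(\l/64)^k\sum_{i=1}^{k}\tfrac{1}{(2i-1)^2}$, a supercongruence of precisely the type of Corollary \ref{maincorollary} and \eqref{cor:4}. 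Proving these auxiliary harmonic-sum supercongruences at the required order, uniformly in $s$, together with the singular-index bookkeeping above, is the genuine obstacle; this is exactly why the statement is only a conjecture, whereas its $s=1$ specialization, in which there is no lower index and the deficit never appears, is the already-proved Theorem \ref{maintheorem}.
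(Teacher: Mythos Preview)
The statement you address is explicitly labeled a \emph{Conjecture} in the paper and is left open; the authors offer no proof, only the remark that the congruences appear to hold (and even seem to hold modulo $p^3$ when $m=s=1$). There is therefore no paper proof to compare your proposal against.

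Read as a diagnosis rather than a proof, your analysis is essentially sound. The product expansion
\[
\binom{\tfrac{mp^s-1}{2}+k}{2k}=\Bigl(\tfrac{-1}{16}\Bigr)^k\binom{2k}{k}\prod_{j=-k+1}^{k}\Bigl(1+\tfrac{mp^s}{2j-1}\Bigr)
\]
is correct whenever each $2j-1$ is a $p$-adic unit, and the symmetry of the index set does kill the first-order term, pushing the regular range $k\le\tfrac{p-1}{2}$ to precision $p^{2s}$ at the top index. You rightly flag, but do not complete, the singular-index bookkeeping for $k>\tfrac{p-1}{2}$; this is already nontrivial and not obviously covered by carry counting alone, so the top-index congruence $F_3(\l)_{(mp^s-1)/2}\equiv P_{(mp^s-1)/2}(\sqrt{1-\l})^2\pmod{p^{2s}}$ is itself not yet established for $s>1$.

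The genuine obstruction you isolate at the lower index is exactly the point. Squaring Theorem~\ref{CosterVanHammeTheorem} gives the Legendre-polynomial side to full precision $p^{2s}$, but replacing $P_{(mp^{s-1}-1)/2}(\sqrt{1-\l})^2$ by $F_3(\l)_{(mp^{s-1}-1)/2}$ via the same mechanism yields only $p^{2(s-1)}$, and the residual $p^2$ deficit is precisely a harmonic-weighted supercongruence of the flavor of Corollary~\ref{maincorollary}. The paper can prove such an auxiliary congruence only for $m=s=1$, and only \emph{as a consequence} of Theorem~\ref{maintheorem}, not independently; so your reduction does not close the loop. One minor imprecision: the ``$s=1$ specialization'' of the conjecture with general odd $m$ is \emph{not} Theorem~\ref{maintheorem} (which is the case $m=s=1$ only); for $m>1$ the lower index $\tfrac{m-1}{2}$ is present and the identity \eqref{ZHSunidentity} does not equate $F_3(\l)_{(m-1)/2}$ with $P_{(m-1)/2}(\sqrt{1-\l})^2$ on the nose, so even the $s=1$, $m>1$ case of the conjecture appears to remain open.
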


\section{Corollaries} \label{corollarysec}

An idea of Gessel for dealing with the supercongruences of 
the Ap\'ery numbers \[ c_n=\, _{4}F_{3}\left(\begin{array}{c}
-n,-n,1+n,1+n \\ 1,\; 1,\; 1 \end{array};1\right)=\sum_{k=0}^n \binom{n}{k}^2 \binom{n+k}{k}^2\] is as follows.
He identified the auxiliary sequence $ b_n = 2 \sum_{k=0}^n \binom{n}{k}^2 \binom{n+k}{k}^2 (H_{n+k}
-H_{n-k}),$ where $H_k$ is the harmonic sum $\sum_{j=1}^k \frac 1j$,
and showed that $c_{k+pn} \equiv (c_k+pnb_k)c_n \pmod{p^2}$ where $0 \leq k < p$ \cite{Gessel}.  Using the idea of Ishikawa \cite{Ishikawa}, we take
$k=n=\frac{p-1}{2}$. It follows that when $c_{(p-1)/2}\not \equiv 0 \pmod p$,
 we have the supercongruence $c_{(p^2-1)/2} \equiv c_{(p-1)/2}^2 \pmod{p^2}$, since
$b_{(p-1)/2}  \equiv 0 \pmod{p}$ from the $p$-adic properties of Harmonic sums.
In \cite{AO}, Ahlgren and Ono also need an
entity similar to $b_{(p-1)/2}$ to be zero modulo $p$, which they established using a binomial coefficient identity proved by the WZ method \cite{AEO}.

In the above examples, supercongruences of a sequence $c_n$ were shown to be equivalent to congruences of an auxilliary sequence $b_n$; 
and the congruences for $b_n$ were proved using whatever method applied in each case.  Similarly, 
the supercongruence in Theorem \ref{maintheorem} for the sequence $a_n=\sum_{i=0}^n \binom{2i}{i}^3 (\frac{\l}{64})^i$ is equivalent 
to the auxiliary congruence in Corollary \ref{maincorollary} for the sequence $b_n = 
\sum_{i=0}^n \binom{2i}{i}^3 (\frac{\l}{64})^i(6(H_{2i}-H_i) + \frac{(\l/64)^{p-1}-1}{p})$.  However,  we 
proved our supercongruence using the theorem of Coster and Van Hamme, and thus obtain our auxilliary congruence.  We know of 
no direct proof of Corollary \ref{maincorollary}; we expect a proof for each fixed individual $\l$ might
require some combinatorial identity and additional intelligent guesses of WZ pairs to prove the identity, see \cite{Ahlgren, AO}.

\begin{lemma} \label{aux}
For the sequence $a_n=\sum_{i=0}^n \binom{2i}{i}^3 (\frac{\l}{64})^i$, we introduce the auxilliary sequence $b_n = 
\sum_{i=0}^n \binom{2i}{i}^3 (\frac{\l}{64})^i(6(H_{2i}-H_i) + \frac{(\l/64)^{p-1}-1}{p})$. Then for any prime $p$, any $k$ with 
$\frac{p-1}{2}\leq k <p$, and any $n$, 
\[a_{k+pn} \equiv a_k a_n + p  b_k \sum_{i=0}^n i \binom{2i}{i}^3 \left(\frac{\l}{64}\right)^i  \pmod{p^2}. \]
\end{lemma}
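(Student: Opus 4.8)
The plan is to imitate Gessel's convolution argument for the Ap\'ery numbers \cite{Gessel}, adapted to the sequence $A_i := \binom{2i}{i}^3$ with weight $x := \frac{\l}{64}$, which we take to be a $p$-adic unit (the definition of $b_k$ requires $x^{p-1}\equiv 1 \pmod p$). Writing $a_n = \sum_{i=0}^n A_i x^i$, I would first split the index of $a_{k+pn} = \sum_{j=0}^{k+pn} A_j x^j$ by putting $j = i + pm$ with $0 \le i < p$. The outer index $m$ then runs from $0$ to $n$, with $i$ running up to $p-1$ when $m<n$ and only up to $k$ when $m=n$.

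The first key observation removes the ``extra'' terms $k < i \le p-1$ occurring when $m<n$: since $k \ge \frac{p-1}{2}$ forces $i \ge \frac{p+1}{2}$, there is a carry when adding $(i+pm)+(i+pm)$ in base $p$, so by Kummer's theorem $p \mid \binom{2(i+pm)}{i+pm}$ and hence $p^3 \mid A_{i+pm}$. Thus all such terms vanish modulo $p^2$ and I may restrict both sums to $0 \le i \le k$, $0 \le m \le n$, obtaining
\[ a_{k+pn} \equiv \sum_{m=0}^n \sum_{i=0}^k A_{i+pm}\, x^{i+pm} \pmod{p^2}. \]

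The heart of the proof is a pair of factorizations modulo $p^2$. On the analytic side, writing $u = \frac{x^{p-1}-1}{p}$ and using $x^p = x(1+pu)$, one gets $x^{i+pm} \equiv x^{i+m}(1+pmu) \pmod{p^2}$. On the combinatorial side, I would establish the binomial congruence $\binom{2(i+pm)}{i+pm} \equiv \binom{2i}{i}\binom{2m}{m}\bigl(1 + 2pm(H_{2i}-H_i)\bigr) \pmod{p^2}$ for $0 \le i \le \frac{p-1}{2}$ (the only surviving range, since larger $i$ again contribute factors of $p^3$). Cubing yields $A_{i+pm} \equiv A_i A_m\bigl(1 + 6pm(H_{2i}-H_i)\bigr) \pmod{p^2}$, and multiplying the two factorizations collapses the double sum into $a_k a_n + p\, b_k \sum_{i=0}^n i A_i x^i$, which is exactly the claim.

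I expect the binomial congruence to be the main obstacle. The route I have in mind is to strip the multiples of $p$ out of $(2i+2pm)!$ and $(i+pm)!^{2}$, which produces the clean factor $\binom{2m}{m}$ together with products of the non-multiples of $p$ over complete residue blocks $\{pr+1,\dots,pr+p-1\}$ plus one partial block. Each complete block contributes $(p-1)!\bigl(1 + pr H_{p-1}\bigr) \equiv (p-1)! \pmod{p^2}$ because $H_{p-1}\equiv 0 \pmod p$, while the partial blocks contribute $(2i)!(1+2pmH_{2i})$ in the numerator and $i!(1+pmH_i)$, squared, in the denominator; assembling these gives the factor $\binom{2i}{i}\bigl(1 + 2pm(H_{2i}-H_i)\bigr)$. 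The bookkeeping of harmonic sums modulo $p^2$ and the careful separation of the carry versus no-carry ranges are the only delicate points.
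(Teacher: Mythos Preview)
Your proposal is correct and takes essentially the same approach as the paper. The only differences are cosmetic: the paper organizes the computation as a telescoping sum $a_{k+pn}-a_ka_n=\sum_{m=1}^n T_{k,m}$ and expresses the key congruence $\binom{2i+2pm}{i+pm}\equiv\binom{2i}{i}\binom{2m}{m}\bigl(1+2pm(H_{2i}-H_i)\bigr)\pmod{p^2}$ via the $p$-adic Gamma function $\Gamma_p$, whereas you write the double sum directly and strip multiples of $p$ from the factorials by hand---both routes rest on the same carry analysis and on $H_{p-1}\equiv 0\pmod p$.
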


\begin{proof}
Notice we can write $a_{k+pn} - a_k a_n$ as the telescoping sum $\sum\limits_{i = 1}^{n}T_{k,i}$, where
\begin{align*}
T_{k,n} &= (a_{k+pn} - a_k a_n) - (a_{k+p(n-1)} - a_ka_{n-1})\\
&= (a_{k+pn} - a_{k+p(n-1)}) - a_{k}(a_n - a_{n-1})\\
&= \sum\limits_{i=-p+k+1}^{k} \binom{2i+2pn}{i+pn}^3 \left( \frac{\lambda}{64} \right)^{i+pn} - 
\left( \sum\limits_{i=0}^{k} \binom{2i}{i}^3  \left( \frac{\lambda}{64} \right)^{i} \right) \binom{2n}{n}^3 \left( \frac{\lambda}{64} \right)^{n} 
\end{align*}
Using the condition that $\frac{p-1}{2}\leq k <p$, we notice that $\binom{2i+2pn}{i+pn} \equiv 0 \pmod{p}$ if $-p+k+1 <i<0$.  
Simplifying modulo $p^2$, these terms disappear and we can factor.

\begin{align*}
T_{k,n} &\equiv \sum\limits_{i=0}^{k}\left( \binom{2i+2pn}{i+pn}^3 
\left( \frac{\lambda}{64} \right)^{pn} - \binom{2n}{n}^3 \left( \frac{\lambda}{64} \right)^{n} \binom{2i}{i}^3 \right) 
\left( \frac{\lambda}{64} \right)^{i} \pmod {p^2}
\end{align*}

The factor $\binom{2i+2pn}{i+pn}^3$ may be rewritten as $ \frac{-\Gamma_p(1+2i+2pn)^3}{\Gamma_p(1+i+pn)^6}  \binom{2n}{n}^3$, 
where $\Gamma_p$ is the $p$-adic gamma function (see Chapter 11 \cite{Ono}). Let 
$T_{k,n} \equiv \left( \frac{\lambda}{64} \right)^n \binom{2n}{n}^3 U_{k,n} \pmod {p^2}$, where 
$$U_{k,n} = \sum\limits_{i=0}^{k} \left( \left( \frac{-\Gamma_p(1+2i+2pn)^3}{\Gamma_p(1+i+pn)^6} \right) 
\left( \frac{\lambda}{64} \right)^{(p - 1)n} - \binom{2i}{i}^3 \right) \left( \frac{\lambda}{64} \right)^{i}.$$ 
To simplify the $p$-adic gamma function modulo $p^2$, we expand $\Gamma_p$ in terms of factorials and 
harmonic sums $H_n=\sum_{i=1}^n \frac{1}{i}$.  (By convention, $H_0=0$.)  We also use the congruence, for $p>3$, that
$H_{p-1} \equiv 0 \pmod {p}$.  (Wolstenholme has shown this congruence holds modulo $p^2$, though we only need modulo $p$.)
\begin{align*}
\Gamma_p(1+i+pn)^r &\equiv (-1)^{(1+i+pn)r}i!^r (1 + pnrH_i)\prod\limits_{j = 0}^{n - 1}(p-1)!^r(1 + pjrH_{p-1}) \pmod {p^2}\\
&\equiv (-1)^{(1+i+pn)r}i!^r (1 + pnrH_i)(-1)^{nr} \pmod {p^2}\\
&\equiv (-1)^{(1+i)r}i!^r (1 + pnrH_i) \pmod {p^2}
\end{align*}

Plugging this into $U_{k,n}$, we have

\begin{align*}
U_{k,n} &\equiv \sum\limits_{i=0}^{k} \left( \left( \frac{(2i)!^3 (1 + 6pnH_{2i}) }{(i)!^6 
(1 + 6pnH_{i})} \right) \left( \frac{\lambda}{64} \right)^{(p - 1)n} - \binom{2i}{i}^3 \right) \left( \frac{\lambda}{64} \right)^{i} \pmod {p^2} \\
 &\equiv \sum\limits_{i=0}^{k} \binom{2i}{i}^3 \left( \frac{\lambda}{64} \right)^i  
 \left( ( 1+ 6pn(H_{2i} - H_i)) \left( \frac{\lambda}{64} \right)^{(p - 1)n} - 1 \right) \pmod {p^2}
\end{align*}

Using $\left( \frac{\lambda}{64} \right)^{(p - 1)n} = \left( 1 + p \left( \frac{\left( \frac{\lambda}{64} \right)^{p - 1} - 1}{p} \right) \right)^n
\equiv 1 +  p n \left( \frac{\left( \frac{\lambda}{64} \right)^{p - 1} - 1}{p} \right) \pmod {p^2}$, 

\begin{align*}
U_{k,n} &\equiv \sum\limits_{i=0}^{k} \binom{2i}{i}^3 \left( \frac{\lambda}{64} \right)^i 
\left( \left( 1+ 6pn(H_{2i} - H_i)\right)\left(1+  pn \left( \frac{\left( \frac{\lambda}{64} \right)^{p - 1} - 1}{p}\right)\right)-1 \right) \\
&\equiv pn\sum\limits_{i=0}^{k} \binom{2i}{i}^3 \left( \frac{\lambda}{64} \right)^i  \left( 6(H_{2i} - H_i) + 
\left( \frac{\left( \frac{\lambda}{64} \right)^{p - 1} - 1}{p} \right) \right) \pmod {p^2}
\end{align*}

So $T_{k,n} \equiv pn \binom{2n}{n}^3 (\frac{\l}{64})^n b_k \pmod{p^2}$.  Combining this congruence with the 
telescoping sum  $a_{k+pn} - a_k a_n=\sum\limits_{i = 1}^{n}T_{k,i}$ completes the proof of the lemma.
\end{proof}

Using this lemma, 
we show the equivalence of Theorem \ref{maintheorem} and Corollary \ref{maincorollary}.

\begin{poc}
We consider $T_{k,n}$ with $k = \frac{p-1}{2}$ and $n=1$.
By definition, $T_{\frac{p-1}{2},1} = a_{\frac{3p-1}{2}} - a_{\frac{p-1}{2}}a_{\frac{3-1}{2}}$; we can 
rewrite this, modulo $p^2$, as $P_{\frac{3p-1}{2}}(\sqrt{1-\l})^2 - P_{\frac{p-1}{2}}(\sqrt{1-\l})^2P_{\frac{3-1}{2}}(\sqrt{1-\l})^2$.  Since 
the sequence $P_{\frac{n-1}{2}}(\sqrt{1-\l})$ satisfies ASD congruences, we know that $T_{\frac{p-1}{2},1} \equiv 0 \pmod{p}$.  However, 
Theorem \ref{maintheorem} is precisely the information we need to conclude that $T_{\frac{p-1}{2},1} \equiv 0 \pmod{p^2}$ whenever 
$\l$ is a CM value of $E_\l$ that embeds in $\mathbb{Z}_p$.  

Thus, since
\[T_{\frac{p-1}{2},1} \equiv \frac{p\l}{8} \sum\limits_{i=0}^{(p-1)/2} \binom{2i}{i}^3 \left( \frac{\lambda}{64} \right)^i
\left( 6(H_{2i} - H_i) + \left( \frac{\left( \frac{\lambda}{64} \right)^{p - 1} - 1}{p} \right) \right) \pmod{p^2},\]
we have the desired congruence $b_{\frac{p-1}{2}} \equiv 0 \pmod{p}$ whenever we have supercongruences for $a_{\frac{p-1}{2}}$.

\end{poc}

\section{Acknowledgements}

The paper grew out of the Research Experience for  Undergraduates in number theory at Iowa State, which took place in Fall 2011.
It was supported by NSF grant DMS-1001332. A significant part of the work was done while the second author was visiting Cornell University as an AWM Ruth I.
Michler fellow. She would like to thank both AWM and Cornell for the excellent opportunity.  The authors thank Zhi-Wei Sun for pointing out his papers 
\cite{ZWSun3, ZWSunNote}, which contain his survey of the subject and many of the conjectures studied here, and for pointing 
out \cite{VH}.  The authors also thank Ravi Ramakrishna for his helpful comments.


\begin{thebibliography}{99}
   
\bibitem{Ahlgren} S. Ahlgren, 
 Gaussian hypergeometric series
and combinatorial congruences. \emph{Symbolic computation, number theory, special functions, physics and combinatorics}, (Gainsville, Florida, 1999), 1-12

   
\bibitem{AO} S. Ahlgren and  K. Ono,
 A Gaussian hypergeometric series evaluation and
Ap\'ery number congruences. \emph{Journal f\"{u}r die Reine und Angewandte Mathematik}, \textbf{518}, (2000), 187-212

\bibitem{AEO} S. Ahlgren, S. Ekhad, K. Ono, and D. Zeilberger,
A binomial coefficient identity associated to a conjecture of Beukers. \emph{Electronic Journal of Combinatorics}, \textbf{5}, (1998), Research Paper 10, 1 p.

\bibitem{AOP}
S. Ahlgren, K. Ono and D. Penniston,
 Zeta functions of an infinite family of K3 surfaces,
\emph{American Journal of Mathematics} \textbf{124} No. 2 (Apr., 2002), 353--368.

\bibitem{andrews} G. Andrews, R. Askey, R. Roy,
 Special functions. \emph{Encyclopedia of Mathematics and its Applications}, \textbf{71}, (Cambridge University Press, Cambridge, 1999), xvi+664 pp.

\bibitem{ASD}
A. O. L. Atkin and H. P. F. Swinnerton-Dyer,
 Modular forms on noncongruence subgroups,
\emph{Proc. Sympos. Pure Math.}, \textbf{19} (Amer. Math. Soc., Providence, RI, 1971), 1-25.

\bibitem{CLZ} H. H. Chan, L.  Long, W. Zudilin, 
A supercongruence motivated by the Legendre family of elliptic curves. (Russian. Russian summary) \emph{Mat. Zametki} 88 (2010), no. 4, 620--624; translation in
\emph{Math. Notes} 88 (2010), no. 3-4, 599–602. 

\bibitem{Clemens} C. H. Clemens. \emph{A scrapbook of complex curve theory}, volume 55 of Graduate Studies
in Mathematics. American Mathematical Society, Providence, RI, second edition,
2003.

\bibitem{CosterVanHamme}
M. J. Coster and L. van Hamme,
 Supercongruences of Atkin and Swinnerton-Dyer type for Legendre polynomials,
\emph{Journal of Number Theory}, \textbf{38}, (1991), 265-286.

\bibitem{Deuring} M. Deuring, Die Typen der Multiplikatorenringe elliptischer Funktionenkörper. (German) 
\emph{Abh. Math. Sem. Hansischen Univ.} 14, (1941). 197-272.


\bibitem{Dit1}
 E. J. Ditters, Hilbert functions and Witt functions, \emph{Math Z.}, \textbf{205}, (1990), 247--278.

\bibitem{Dwork}
B. Dwork,
 $p$-adic Cycles, \emph{Publications math\'ematiques de I.H.\'E.S.}, \textbf{tome 37}, (1969), 27--115.


\bibitem{Gessel}
I. Gessel,
 Some Congruences for Ap\'ery Numbers, \emph{Journal of Number Theory}, \textbf{14}, (1982), 362--368.

\bibitem{Ishikawa} T. Ishikawa,
 On Beuker's Conjecture, \emph{Kobe Journal of Mathematics}, \textbf{6 no.1}, (1989), 49-51.



\bibitem{Koblitz}N. Koblitz
The hypergeometric function with p-adic parameters. \emph{Proceedings of the Queen's Number Theory Conference}, 1979 (Kingston, Ont., 1979), pp. 319–328,
Queen's Papers in Pure and Appl. Math., 54, Queen's Univ., Kingston, Ont., 1980. 

\bibitem{Kilbourn}T.  Kilbourn, 
 An extension of the Ap\'ery number supercongruence, \emph{Acta Arithmetica}, \textbf{123 no.4}, (2006), 335-348.

\bibitem{LL}W.W. Li and L. Long,
  Fourier coefficients of noncongruence cuspforms, \emph{The Bulletin of the London Mathematical Society}, \textbf{44(3)}, (2012), 591-598.


\bibitem{Long}L. Long, 
On a Shioda-Inose structure of a family of K3 surfaces. \emph{Calabi-Yau varieties and mirror symmetry}, (Toronto, ON, 2001), 201-207, Fields Inst. Commun., \textbf{38}, \emph{Amer. Math. Soc., Providence, RI}, 2003.


\bibitem{Long2}L. Long,
 Hypergeometric evaluation identities and supercongruences. \emph{Pacific Journal of Mathematics}, \textbf{249, no.2}, (2011), 405-418.

\bibitem{McCarthy}D. McCarthy,
$_3F_2$ Hypergeometric series and periods of elliptic curves. 
 \emph{ International Journal of Number Theory}, \textbf{6}, (2010), no. 3, 461-470.

\bibitem{Mort}
E. Mortenson, 
A supercongruence conjecture of Rodriguez-Villegas for a certain truncated hypergeometric function, \emph{Journal of Number Theory} \textbf{99, no.1}, (2003), 139--147.



\bibitem{Ono} K. Ono, 
The web of modularity: arithmetic of the coefficients of modular forms and q-series. \emph{CBMS Regional Conference Series in Mathematics}, \textbf{102}, (Published for the Conference Board of the Mathematical Sciences, Washington, DC by the American Mathematical Society, Providence, RI, 2004), viii+216 pp.

\bibitem{Ono2} K. Ono, Values of Gaussian hypergeometric series. \emph{Trans. Amer. Math. Soc.}, \textbf{350}, (1998), 1205-1223.

 \bibitem{RV}
F. Rodriguez-Villegas, 
Hypergeometric families of Calabi-Yau manifolds. \emph{Calabi-Yau varieties and mirror symmetry}, (Toronto, ON, 2001), 223-231, Fields Inst. Commun., \textbf{38}, \emph{Amer. Math. Soc., Providence, RI}, 2003.

\bibitem{Scholl}
A. J. Scholl,
 Modular forms and de Rahm cohomology; Atkin-Swinnerton-Dyer congruences. \emph{Inventiones Mathematicae} \textbf{79, no.1}, (1985), 49-77.

\bibitem{Stienstra} J. Stienstra, 
Formal group laws arising from algebraic varieties. \emph{American Journal of Mathematics}, \textbf{109, no.5}, (1987), 907-925.

\bibitem{SB} J. Stienstra and F. Beukers,
  On the Picard-Fuchs Equation and the Formal Brauer Group of Certain Elliptic K3-Surfaces. \emph{Mathematische Annalen}, \textbf{271, no.2}, (1985), 269-304.

\bibitem{Sun}
Z.-H. Sun, 
Congruences concerning Legendre polynomials II. (2010), arXiv:1012.3898.

\bibitem{ZWSun3}    
Z.-W. Sun, Super congruences and Euler numbers. \emph{Sci. China Math.}, \textbf{54}, (2011), 2509-2535.

\bibitem{ZWSunNote}    
Z.-W. Sun, Determining $x$ or $y$ mod $p^2$ with $x^2+dy^2=p$. (2012), arXiv:1210.5237.

\bibitem{JDYu} J. Yu,
 Variation of the unit root along the Dwork family of Calabi-Yau varieties. \emph{Mathematische Annalen}, \textbf{343, no.1}, (2009), 53-78.

\bibitem{VH} L. Van Hamme, Some conjectures concerning partial sums of generalized hypergeometric series. \emph{$p$-adic 
functional analysis}, 223-236, (Nijmegen, 1996), Lecture Notes in Pure and Appl. Math.
\textbf{192}, Dekker, New York, 1997.

%\bibitem{ZWSun}
%Z.-W. Sun,
% Conjectures and results on $x^2 \mod{p^2}$ with $4p = x^2 + dy^2$. to appear in \emph{Number Theory and the Related Topics}, 
% (Higher Education Press and International Press, Beijing and Boston, 2012).

%\bibitem{ZWSun2}
%Z.-W. Sun, On congruences related to central binomial coefficients. \emph{J. Number Theory}, \textbf{131}, (2011), 2219-2238.
    

\bibitem{Zagier}
D. Zagier, 
Integral Solutions of Ap\'ery-like recurrence equations. \emph{Groups and symmetries}, 349-366,
CRM Proc. Lecture Notes, \textbf{47}, Amer. Math. Soc., Providence, RI, 2009.


   \end{thebibliography}
\end{document}